\documentclass[12pt,a4paper,reqno]{amsart}
\usepackage[all]{xy} 
\usepackage{amsmath,enumerate}
\usepackage{amsfonts}
\usepackage{amssymb}
\usepackage{mathrsfs}
\usepackage{amsthm}

\usepackage{tensor}
\usepackage{cancel}
\usepackage{tikz}

\usepackage{xcolor}
\usepackage{graphicx}

\usepackage{amsmath,calligra,mathrsfs}

\newcommand{\scHom}{\mathscr{H}\text{\kern -3pt {\calligra\large om}}}
\newcommand{\scExt}{\mathscr{E}\text{\kern -3pt {\calligra\large xt}}}

\DeclareMathAlphabet{\mathbbold}{U}{bbold}{m}{n}

\DeclareFontEncoding{OT2}{}{} 
\newcommand{\textcyr}[1]{%
 {\fontencoding{OT2}\fontfamily{wncyr}\fontseries{m}\fontshape{n}
 \selectfont #1}}
\newcommand{\Sha}{{\!\be\lbe\mbox{\textcyr{Sh}}}}

\theoremstyle{plain}
\newtheorem{theorem}{Theorem}[section]
\newtheorem{proposition}[theorem]{Proposition}
\newtheorem{lemma}[theorem]{Lemma}
\newtheorem{corollary}[theorem]{Corollary}
\newtheorem{assumption}[theorem]{Assumption}

\theoremstyle{definition}
\newtheorem{definition}[theorem]{Definition}

\theoremstyle{remark}
\newtheorem{remark}[theorem]{Remark}
\def\le{\kern 0.03em}

\def\a{\mathfrak{a}}

\def\Z{{\mathbb Z}}

\def\e{\kern 0.08em}
\def\be{\kern -.1em}
\def\lbe{\kern -.025em}

 \DeclareMathOperator{\rank}{rank}

\DeclareMathOperator{\ord}{ord}

\begin{document}
\title[Specialisations of the BCS main conjecture
to $\mathbb Z_p$-lines]{Specialisations of the Burungale--Castella--Skinner main conjecture
to $\mathbb Z_p$-lines}
\begin{abstract}
Let $p>3$ be a prime,  $E/\mathbb Q$ be an elliptic curve and $K$ an
imaginary quadratic field satisfying the hypotheses of
Burungale--Castella--Skinner, and let $L/K$ be the unique
$\mathbb{Z}_p^2$-extension. In this note, by combining their integral two-variable
main conjecture with the specialisation formula of the first-named
author, we obtain a characteristic-ideal identity over every
$\mathbb{Z}_p$-line in $L/K$, involving an explicit local factor.
With the characteristic ideal of a non-torsion module defined to be
zero, this identity also applies when the two-variable Perrin--Riou
element specialises to zero. We call such lines exceptional and prove that only finitely many occur. We also prove that the cyclotomic line is non-exceptional with
trivial local factor, and that the anticyclotomic line is exceptional.
Finally, we bound the number of exceptional lines by the cyclotomic
augmentation order and prove that if the $p$-primary Tate--Shafarevich group over
$K$ is finite and the cyclotomic $p$-adic height pairing is
non-degenerate, then this number is at most $\rank E(K)$. In particular,
when $E(K)$ has rank one, the anticyclotomic line is the unique
exceptional line.
\end{abstract}

\author{Ki-Seng Tan}
\address{Department of Mathematics\\
National Taiwan University\\
Taipei 10764, Taiwan}
\email{tan@math.ntu.edu.tw}
\author{Fabien Trihan}
\address{Sophia University,
Department of Information and Communication Sciences
7-1 Kioicho, Chiyoda-ku, Tokyo 102-8554, JAPAN}
\email{f-trihan-52m@sophia.ac.jp}

\author{Kwok-Wing Tsoi}
\address{Department of Mathematics\\
National Taiwan University\\
Taipei 10764, Taiwan}
\email{kwokwingtsoi@ntu.edu.tw}
\maketitle
\section{Introduction}

Recently, Burungale, Castella and Skinner \cite{BCS25} established, in
the residually irreducible setting and without imposing any auxiliary
ramification hypothesis on $E[p]$, new cases of Mazur's cyclotomic main
conjecture and Perrin--Riou's anticyclotomic Heegner-point main
conjecture, complementing the results of Castella,
Grossi and Skinner \cite{CGS23} at Eisenstein primes.  As an application, they also proved,
under an additional hypothesis recalled in \S\ref{sec:BCS}, an integral
two-variable main conjecture \cite[Thm.~1.4.1]{BCS25}. The aim of the
present note is, by using the specialisation formula of the first-named
author in \cite[Thm.~1]{Tan14}, to describe the specialisations of this
result to every one-dimensional quotient of the Galois group of the
relevant $\mathbb Z_p^2$-extension.

To formulate our result, let $E/\mathbb Q$ be an elliptic curve, 
$p>3$ be a prime at which $E$ has good ordinary reduction, and 
$K$ be an imaginary quadratic field. We assume throughout that the
triple $(E,p,K)$ satisfies the hypotheses of
Burungale--Castella--Skinner (see \S\ref{sec:BCS}). Let $L/K$
denote the unique $\mathbb Z_p^2$-extension of $K$, and set
\[
   \Gamma=\operatorname{Gal}(L/K)
   \qquad\text{and}\qquad
   \Lambda=\mathbb Z_p[[\Gamma]].
\]
For each intermediate extension $M/K$ of $L/K$, we write $X_M$ for the Pontryagin dual of the ordinary $p$-primary Selmer group of $E$ over $M$. We next write
\[
   \mathcal L^{\mathrm{PR}}_{E/L}\in\Lambda
\]
for the two-variable $p$-adic Rankin $L$-series of Perrin--Riou
\cite{PR88}, with the normalisation to be recalled in \S\ref{sec:BCS}. The two-variable main conjecture proved by Burungale, Castella and Skinner then gives an equality
\begin{equation}\label{eq:intro-BCS}
   \bigl(\mathcal L^{\mathrm{PR}}_{E/L}\bigr)
   =
   \operatorname{char}_{\Lambda}(X_L).
\end{equation}

Recall that a \emph{$\mathbb Z_p$-line} in $L/K$ is an intermediate
extension $L'/K\subset L/K$ such that
\[
   \Gamma':=\operatorname{Gal}(L'/K)\simeq\mathbb Z_p,
   \qquad
   \Lambda':=\mathbb Z_p[[\Gamma']].
\]
It is then natural to ask how the two-variable identity
\eqref{eq:intro-BCS} behaves upon specialisation to each such line. Indeed, this question is related to a general philosophy that
higher-dimensional main conjectures should encode compatible families
of one-variable main conjectures. A result in this direction,
albeit in a rather different setting, is proved by Burns in \cite{Bur15}. In positive characteristic, the first-named author constructs in
\cite{Tan26} $p$-adic $L$-functions over $\mathbb Z_p^d$-extensions
and proves specialisation formulae for them. These ideas are further developed
in \cite{TTT26} by the present authors, where the aforementioned specialisation formulae form part of the
essential input to the proof of an Iwasawa main conjecture for ordinary semistable
elliptic curves over global function fields, subject to a $\mu$-invariant
hypothesis. However, in the
present number-field setting, the passage from \eqref{eq:intro-BCS} to the
corresponding identities over $\mathbb Z_p$-lines is not formal, since
characteristic ideals do not in general commute with passage from
$\Lambda$ to $\Lambda'$.

To address this difficulty, we use a specialisation formula for
characteristic ideals of dual Selmer groups proved by the first-named
author in \cite[Thm.~1]{Tan14}. This formula describes the discrepancy explicitly
in terms of a global factor $\varrho_{L/L'}$ and a product of local factors. We prove
in Theorem~\ref{thm:Tan-specialisation} that, for every
$\mathbb Z_p$-line $L'/K\subset L/K$, one has
\[
   \varrho_{L/L'}=(1).
\]
Thus, in the present setting, the discrepancy under specialisation is
entirely accounted for by the local factors. Their product is a principal ideal
\[
   \vartheta_{L/L'}
   =
   \bigl(\theta_{L/L'}\bigr)
   \subset\Lambda'.
\]
The explicit choice of the generator $\theta_{L/L'}$ will be recalled in \S\ref{sec:linewise-specialisation}. The vanishing criterion in \cite[Lem.~1.8]{Tan14} implies that
$\theta_{L/L'}\neq 0$, and so one can define
\begin{equation}\label{eq:intro-linewise}
   \mathcal L_{E/L'}
   :=
   \theta_{L/L'}^{-1}\cdot 
   \operatorname{pr}_{L'}
      \bigl(\mathcal L^{\mathrm{PR}}_{E/L}\bigr)
   \in\operatorname{Frac}(\Lambda')
\end{equation}
where $\operatorname{pr}_{L'}\colon\Lambda\longrightarrow\Lambda'$
is the homomorphism induced by $\Gamma\twoheadrightarrow\Gamma'$. We refer to $\mathcal L_{E/L'}$ as the \emph{linewise element} associated to $L'$. 

The specialisations that vanish play a different arithmetic role. We shall therefore say that a $\mathbb Z_p$-line $L'/K$ is \emph{exceptional} (for $(E,p,K)$) if $\operatorname{pr}_{L'}
      \bigl(\mathcal L^{\mathrm{PR}}_{E/L}\bigr)=0$ and \emph{non-exceptional} otherwise. Following \cite{Tan14}, we take the characteristic ideal of a non-torsion module to be the zero ideal. With these conventions, the main result of this note is as follows.

Let $e(E,p,K)$ denote the number of exceptional lines, and let
$\operatorname{Reg}_p^{\mathrm{cyc}}(E/K)$ denote the regulator
of Schneider's cyclotomic \(p\)-adic height pairing (see \cite{Sch82}) on
$E(K)\otimes_{\mathbb Z}\mathbb Q_p$.

\begin{theorem}\label{thm:intro-main}
Fix a triple \((E,p,K)\) as above which satisfies Assumption~\ref{ass:BCS}. Let $L'/K$ be any $\mathbb Z_p$-line
in $L/K$.

\begin{enumerate}
\item[(i)]
The element $\mathcal L_{E/L'}$ belongs to $\Lambda'$ and satisfies
\[
   \bigl(\mathcal L_{E/L'}\bigr)
   =
   \operatorname{char}_{\Lambda'}(X_{L'}).
\]

\item[(ii)]
The following  are equivalent:
\begin{enumerate}
\item[(1)] $L'$ is exceptional;
\item[(2)] $\mathcal L_{E/L'}=0$;
\item[(3)] $X_{L'}$ is not torsion over $\Lambda'$.
\end{enumerate}

\item[(iii)]
The number \(e(E,p,K)\) is finite. i.e. there are only finitely many exceptional $\mathbb Z_p$-lines in $L/K$ for $(E,p,K)$.
\item[(iv)] Let $L^+$ and $L^-$ be the cyclotomic and anticyclotomic $\mathbb{Z}_p$-extensions of $K$ respectively. Then the following assertions hold.
\begin{enumerate}[(a)]
\item $\theta_{L/L^+}=1$ and $L^+/K$ is non-exceptional;
\item $L^-/K$ is exceptional.
\end{enumerate}
\item[(v)] If the $p$-primary part of the Tate--Shafarevich group $\Sha(E/K)[p^\infty]$ of $E$ over $K$  is finite and
$\operatorname{Reg}_p^{\mathrm{cyc}}(E/K)\neq0$, then
\[
   e(E,p,K)\leq\rank_{\mathbb Z}E(K).
\]
If, in addition, $\rank_{\mathbb Z}E(K)=1$, then $L^-/K$ is the
unique exceptional line.
\end{enumerate}
\end{theorem}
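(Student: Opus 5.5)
The plan is to assemble the theorem from four ingredients: the two-variable identity \eqref{eq:intro-BCS}, the specialisation formula \cite[Thm.~1]{Tan14} in the form of Theorem~\ref{thm:Tan-specialisation} (where $\varrho_{L/L'}=(1)$), the non-vanishing criterion \cite[Lem.~1.8]{Tan14}, and control of $X$ at the trivial character.

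\textbf{Step 1 (parts (i) and (ii)).} Write $\operatorname{char}_\Lambda(X_L)=(f)$. Theorem~\ref{thm:Tan-specialisation} gives
\[
\operatorname{pr}_{L'}\bigl(\operatorname{char}_\Lambda(X_L)\bigr)=\varrho_{L/L'}\,\vartheta_{L/L'}\,\operatorname{char}_{\Lambda'}(X_{L'})=(\theta_{L/L'})\operatorname{char}_{\Lambda'}(X_{L'}).
\]
Here a non-torsion module has characteristic ideal $0$. Combining this with \eqref{eq:intro-BCS} gives
\[
\bigl(\operatorname{pr}_{L'}\mathcal L^{\mathrm{PR}}_{E/L}\bigr)=(\theta_{L/L'})\operatorname{char}_{\Lambda'}(X_{L'}).
\]
Since $\theta_{L/L'}\neq0$, dividing by it shows that $\mathcal L_{E/L'}$ generates $\operatorname{char}_{\Lambda'}(X_{L'})\subset\Lambda'$. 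Hence $\mathcal L_{E/L'}\in\Lambda'$ up to a unit, and exactly in $\Lambda'$ because it generates an integral ideal. This proves (i).

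The equivalences in (ii) follow at once from the same displayed identity. Again $\theta_{L/L'}\ne0$, and a characteristic ideal vanishes exactly when the module is non-torsion.

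\textbf{Step 2 (part (iii)).} Write $F=\mathcal L^{\mathrm{PR}}_{E/L}\in\Lambda\cong\mathbb Z_p[[S,T]]$. It is nonzero: otherwise $X_L$ is non-torsion, contradicting (a) below via (ii) and the control-type argument behind it. A line $L'$ corresponds to a height-one prime $P_{L'}=\ker\operatorname{pr}_{L'}$, generated by $\gamma'-1$ for a primitive $\gamma'$ in $\ker(\Gamma\to\Gamma')$. The line is exceptional iff $P_{L'}\mid F$. Since $\Lambda$ is a UFD, $F$ has finitely many prime factors, and distinct lines give distinct non-associate primes. So $e(E,p,K)$ is finite, bounded by the number of factors $\gamma'-1$ appearing in $F$.

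\textbf{Step 3 (part (iv)).} For (a), the local factors in $\vartheta_{L/L^+}$ come from primes of $L^+$ whose decomposition group in $\operatorname{Gal}(L/L^+)$ is nontrivial, that is, primes that split finitely in $L^+$ but not in $L$. Since $p$ splits in $K$ and is ramified in both directions, and primes away from $p$ are finitely decomposed in $L^-$ only in controlled ways, the explicit recipe from \S\ref{sec:linewise-specialisation} should give trivial factors. Then $\operatorname{pr}_{L^+}F$ generates $\operatorname{char}(X_{L^+})$, which is Mazur's cyclotomic main conjecture over $K$. Non-exceptionality follows from Kato/Rohrlich non-vanishing: $X_{L^+}$ is torsion.

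For (b), under the Heegner hypothesis of BCS the root number forces $\operatorname{pr}_{L^-}F=0$. Equivalently, $X_{L^-}$ has $\Lambda'$-rank one, by Cornut--Vatsal nontriviality of Heegner points and the Perrin--Riou/Howard structure theorem. I then apply (ii).

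\textbf{Step 4 (part (v)), the main obstacle.} Let $\mathfrak a$ be the augmentation ideal of $\Lambda$. Each exceptional line contributes a factor $\gamma'-1\in\mathfrak a\setminus\mathfrak a^2$, so $F\in\mathfrak a^{e}$ with $e=e(E,p,K)$. Projecting to $\Lambda^+$ then gives $\operatorname{ord}_{\mathfrak a^+}\operatorname{pr}_{L^+}F\ge e$. I still have to check that no factor becomes a unit or collapses: each $\gamma'-1$ maps to a nonzero element of the augmentation ideal of $\Lambda^+$, because $L'\ne L^+$.

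By (iv)(a), $\operatorname{pr}_{L^+}F$ generates $\operatorname{char}(X_{L^+})$. The cyclotomic augmentation order is therefore controlled by the Perrin--Riou/Schneider $p$-adic BSD leading-term formula. When $\Sha(E/K)[p^\infty]$ is finite and $\operatorname{Reg}_p^{\mathrm{cyc}}\ne0$, the order of vanishing at $T=0$ equals $\rank E(K)$, since Schneider's descent gives no extra zeros in the good ordinary split setting with normalised $\theta=1$. This yields $e\le\rank E(K)$.

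If the rank is one, (iv)(b) already supplies $L^-$ as an exceptional line, so it is the unique one.

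The delicate point is the cyclotomic order computation. Possible trivial-zero or Tamagawa issues must not inflate the order, and I must verify that $\theta_{L/L^+}=1$ really holds with the chosen normalisation.
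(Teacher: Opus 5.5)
Your outline follows the paper for (i)--(iii) and for the structure of (v), and your alternative citations in (iv) are acceptable. Those are Kato's torsion theorem over $K\mathbb{Q}_\infty$ combined with (ii) for non-exceptionality of $L^+$, and Howard/Cornut--Vatsal (or the root number) for $L^-$. The genuine gap is the claim $\theta_{L/L^+}=1$ in (iv)(a). You leave it at ``should give trivial factors'' and flag it yourself as unverified. Your description of the contributing places is also wrong. For $w\nmid p$, $\Gamma_w$ is procyclic and $\Gamma/\Psi$ is torsion-free, so $\Psi_w\neq0$ forces $\Gamma_w\subseteq\Psi$, i.e.\ $w$ splits completely in $L^+/K$; the behaviour of $w$ in $L^-$ plays no role.

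The missing idea is that no finite place splits completely in the cyclotomic $\mathbb{Z}_p$-extension. Under $\Gamma^+\simeq\operatorname{Gal}(\mathbb{Q}_\infty/\mathbb{Q})\simeq 1+p\mathbb{Z}_p$, the Frobenius of $w$ maps to the principal-unit part of $q_w=\#\mathbb{F}_w$. This has infinite order because the integer $q_w>1$ is not a root of unity. Hence $\Gamma_w\hookrightarrow\Gamma^+$ and $\Psi_w=0$ for every $w\nmid p$, so no factor $c_w^{(p)}$ enters. Also $e_v(L^+)=e_{\bar v}(L^+)=1$, because $K_v=K_{\bar v}=\mathbb{Q}_p$ and its cyclotomic $\mathbb{Z}_p$-extension is totally ramified. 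This is exactly the paper's argument.

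Your Step 4 inherits the gap, since it uses $\theta_{L/L^+}=1$ to identify $\operatorname{pr}^+(F)$ with a generator of $\operatorname{char}_{\Lambda^+}(X_{L^+})$. There the gap can be bypassed. By Lemma~\ref{lem:Tan-factor-nonvanishing} at the trivial character, $\theta_{L/L^+}\notin J^+$, so $\ord_{J^+}(\operatorname{pr}^+F)=\ord_{J^+}(\mathcal L_{E/L^+})$ in $\Lambda^+\simeq\mathbb{Z}_p[[T]]$ regardless of what $\theta_{L/L^+}$ is. Your trivial-zero worry is unfounded: Schneider's theorem concerns $\operatorname{char}_{\Lambda^+}(X_{L^+})$ with good ordinary reduction at $v$ and $\bar v$. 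Finally, $F\neq0$ in Step 2 follows directly from Theorem~\ref{thm:BCS}, since $X_L$ is $\Lambda$-torsion; the detour through (iv)(a) and a ``control-type argument'' is unnecessary.
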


In particular, the discrepancy between the two characteristic ideals under
specialisation is measured precisely by the
local factor occurring in \cite[Thm.~1]{Tan14}. Our main theorem also gives a Selmer-theoretic interpretation
of exceptionality: a $\Z_p$-line is exceptional precisely when the corresponding ordinary Selmer module is non-torsion. Finally, we will show, in Theorem \ref{thm:linewise-specialisation}, that the linewise element retains the analytic information carried by the
two-variable Perrin--Riou element. More precisely, for finite-order characters in the range of the interpolation formula of
\cite[Thm.~2.2.1]{CGS23}, we derive an explicit interpolation formula for the linewise element in terms of central values of the corresponding complex Rankin $L$-functions.

In brief, the contents of this note are as follows. In \S\ref{sec:BCS}, we recall
the precise hypotheses of Burungale--Castella--Skinner, fix the
normalisations of the Selmer groups and $p$-adic $L$-functions, and
state the two-variable main conjecture in the form that we use. In \S\ref{sec:linewise-specialisation},
we recall the specialisation formula of \cite{Tan14}, describe
its global and local factors in the present setting, and prove  Theorem~\ref{thm:intro-main}(i) and (ii). We also
give there the interpolation formula for the linewise element. Finally, in \S4, we prove Theorem~\ref{thm:intro-main}(iii), study the cyclotomic and
anticyclotomic lines, and establish the conditional uniqueness
criterion in Theorem~\ref{thm:intro-main}(v).
\section{A review of the Burungale--Castella--Skinner theorem}
\label{sec:BCS}

We now make precise the hypotheses and normalisations used in the
article. Let $E/\mathbb Q$ be an elliptic curve of conductor $N$, let
$p>3$ be a prime, and let $K$ be an imaginary quadratic field of discriminant
$D_K$. We fix embeddings
\[
   \iota_\infty\colon\overline{\mathbb Q}\hookrightarrow\mathbb C,
   \qquad
   \iota_p\colon\overline{\mathbb Q}\hookrightarrow
                     \overline{\mathbb Q}_p,
\]
and use them to regard algebraic normalisations of complex $L$-values
as elements of $\overline{\mathbb Q}_p$. For any number field $F$, we
write $G_F=\operatorname{Gal}(\overline{\mathbb Q}/F)$. The following hypotheses, taken from
\cite[\S\S1.2 and~1.4]{BCS25}, will be in force throughout.

\begin{assumption}[BCS hypotheses]\label{ass:BCS}
The triple $(E,p,K)$ satisfies the following conditions:
\begin{itemize}
\item $E$ has good ordinary reduction at $p$;

\item $(\mathrm{disc})$ $D_K$ is odd and $D_K\neq -3$;

\item $(\mathrm{Heeg})$ every prime dividing $N$ splits in $K$;

\item $(\mathrm{spl})$ $p=v\bar v$
in $K$, where $v$ is the prime determined by $\iota_p$;

\item $(\mathrm{sur})$ the residual representation
\[
   \overline{\rho}_{E,p}\colon
   G_{\mathbb Q}\longrightarrow
   \operatorname{Aut}_{\mathbb F_p}(E[p])
\]
is surjective. Consequently, condition
\[
   (\mathrm{irr}_{\mathbb Q})\qquad
   E[p]\text{ is an irreducible }G_{\mathbb Q}\text{-module}
\]
also holds;

\item the set of `vexing primes'
\[
   V=
   \left\{
      \ell\equiv -1\pmod p\ \middle|\
      \left.\overline{\rho}_{E,p}\right|_{G_{\mathbb Q_\ell}}
          \text{ is irreducible and }
      \left.\overline{\rho}_{E,p}\right|_{I_\ell}
          \text{ is reducible}
   \right\}
\]
is empty, where $I_\ell\subset G_{\mathbb Q_\ell}$ denotes the inertia
subgroup.
\end{itemize}
\end{assumption}
\begin{remark}
In the terminology of \cite{BCS25}, Assumption~\ref{ass:BCS}
includes $(\mathrm{disc})$, $(\mathrm{Heeg})$, $(\mathrm{spl})$,
$(\mathrm{sur})$, and $V=\varnothing$. Since $(\mathrm{sur})$
implies $(\mathrm{irr}_{\mathbb Q})$, all the hypotheses of the
integral assertion \cite[Thm.~1.4.1(b)]{BCS25} are satisfied.
\end{remark}

For $M=L$ or $L'$, we put
\[
   X_M
   :=
   \operatorname{Hom}_{\mathbb Z_p}
   \bigl(\operatorname{Sel}_{p^\infty}(E/M),
         \mathbb Q_p/\mathbb Z_p\bigr)
\]
to be the Pontryagin dual of the $p$-primary Selmer group. This
definition agrees with that of \cite[p.~1025]{Tan14}. The extension
denoted $K_\infty/K$ in \cite[\S1.4]{BCS25} is the unique
$\mathbb Z_p^2$-extension of $K$. Hence $K_\infty=L$ and, by
definition, $X_L=X^{\mathrm{ord}}(E/K_\infty)$.
It is known that the modules $X_L$ and $X_{L'}$ are finitely generated over $\Lambda$
and $\Lambda'$, respectively (see
\cite[Prop.~1.1 and Cor.~2.14]{Tan14}).

Let $\mathcal L^{\mathrm{PR}}_{E/L}\in\Lambda$
denote the two-variable $p$-adic Rankin $L$-series constructed by
Perrin--Riou \cite{PR88}, with the normalisation of
\cite[Def.~2.2.2]{CGS23}. The integrality of this normalisation is
recalled in \cite[\S 1.4]{BCS25}. The result of
Burungale--Castella--Skinner that we shall use can now be stated as
follows.

\begin{theorem}[Burungale--Castella--Skinner]
\label{thm:BCS}
Under Assumption~\ref{ass:BCS}, the module $X_L$ is torsion over
$\Lambda$, and
\begin{equation}\label{eq:BCS-main}
   \operatorname{char}_{\Lambda}(X_L)
   =
   \bigl(\mathcal L^{\mathrm{PR}}_{E/L}\bigr).
\end{equation}
\end{theorem}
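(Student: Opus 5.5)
The statement just above is Theorem~\ref{thm:BCS}, which the paper takes from \cite{BCS25}. My plan is therefore to check that its hypotheses and normalisations match those of \cite[Thm.~1.4.1(b)]{BCS25}, not to reprove it. There are three steps.

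First, I will check that Assumption~\ref{ass:BCS} contains every hypothesis of \cite[Thm.~1.4.1(b)]{BCS25}. These are $(\mathrm{disc})$, $(\mathrm{Heeg})$, $(\mathrm{spl})$, $(\mathrm{irr}_{\mathbb Q})$, the surjectivity condition $(\mathrm{sur})$ needed for the integral (rather than rational) form of the statement, and $V=\varnothing$. Good ordinary reduction at $p$ and $p>3$ are also assumed. Since $(\mathrm{sur})$ gives $(\mathrm{irr}_{\mathbb Q})$, the remark after the assumption already records that everything is in place.

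Second, I will match the objects. As noted, $K_\infty=L$ and $X_L=X^{\mathrm{ord}}(E/K_\infty)$. I need to verify that the ordinary Selmer group in \cite{BCS25}, with the Greenberg condition at $v$ and $\bar v$, coincides with the $p^\infty$-Selmer group used here and in \cite{Tan14}. Over any subextension of the $\mathbb Z_p^2$-extension, for good ordinary reduction, the Kummer image at primes above $p$ agrees with the ordinary local condition up to finite error. On the Iwasawa-theoretic level, after passing to the limit, the two conditions coincide exactly, as in Greenberg's comparison. Away from $p$, both Selmer groups use the unramified or Kummer conditions, and these agree because primes away from $p$ are finitely decomposed and $E(K_w)\otimes\mathbb Q_p/\mathbb Z_p=0$. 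Torsionness of $X_L$ is part of the cited theorem. Alternatively, it follows from the equality itself once we know $\mathcal L^{\mathrm{PR}}_{E/L}\neq 0$, which holds because it interpolates nonzero central $L$-values, for instance via Rohrlich-type nonvanishing for cyclotomic twists.

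Third, I will match the $p$-adic $L$-function. Perrin--Riou's element with the normalisation of \cite[Def.~2.2.2]{CGS23} is exactly the element that \cite{BCS25} show lies in $\Lambda$ and that appears in their equality of ideals. The normalisation involves periods, the choice of Néron differential, and Manin-constant-type factors. Any unit discrepancy does not affect the ideal, and non-unit constants are excluded by the integrality statement recalled in \cite[\S1.4]{BCS25}.

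The main obstacle I expect is the Selmer-group comparison at primes above $p$: making sure no pseudo-null or finite discrepancy (for example from $E(K_w)[p^\infty]$ at primes above $p$ in $L$) changes the characteristic ideal. I would handle this by showing the comparison map has kernel and cokernel that are pseudo-null over the two-dimensional ring $\Lambda$. Such modules have trivial characteristic ideal, so the equality transfers.
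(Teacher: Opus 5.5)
Your proposal follows the paper's route: the paper's proof is only a citation, taking torsionness and the rational equality from \cite[Thm.~1.4.1(a)]{BCS25} (using that $(\mathrm{sur})$ implies $(\mathrm{irr}_{\mathbb Q})$) and the integral equality from \cite[Thm.~1.4.1(b)]{BCS25}; it treats $K_\infty=L$, $X_L=X^{\mathrm{ord}}(E/K_\infty)$ and the \cite[Def.~2.2.2]{CGS23} normalisation of $\mathcal L^{\mathrm{PR}}_{E/L}$ as matching by definition, rather than re-verifying them as you do. One small caution: integrality alone does not rule out a non-unit scalar discrepancy between two normalisations (both $\mathcal L$ and $p\mathcal L$ are integral), so what you actually need is that the \cite[Def.~2.2.2]{CGS23} normalisation is literally the one used in \cite{BCS25}, as the paper assumes.
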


\begin{proof}
By $(\mathrm{sur})$, condition $(\mathrm{irr}_{\mathbb Q})$ holds.
The torsion assertion and the equality after tensoring with
$\mathbb Q_p$ therefore follow from \cite[Thm.~1.4.1(a)]{BCS25}.
The integral equality \eqref{eq:BCS-main} follows from
\cite[Thm.~1.4.1(b)]{BCS25}.
\end{proof}

We shall also use the interpolation property of
$\mathcal L^{\mathrm{PR}}_{E/L}$. Put
\[
   a_p=p+1-\#E(\mathbb F_p),
\]
and let $\alpha_p\in\mathbb Z_p^\times$ be the ordinary unit root of $X^2-a_pX+p.$

If $\omega_E$ is a N\'eron differential on $E$, set
\[
   \Omega_{E/K}
   :=
   \frac{1}{\sqrt{|D_K|}}
   \int_{E(\mathbb C)}\omega_E\wedge \mathrm{i}\overline{\omega_E}.
\]

Let $\eta\colon\Gamma\longrightarrow\overline{\mathbb Q}_p^{\,\times}$ be a finite-order character. For brevity, we call $\eta$
\emph{CGS-admissible} if either $\eta=1$, or its conductor has the form
\[
   v^m\bar v^{\,n}
   \qquad (m,n\geq 0,\;m+n>0).
\]
Thus a nontrivial everywhere-unramified finite-order character is not
CGS-admissible in this terminology.
Let
\[
  \operatorname{Art}^{\mathrm{ar}}_{L/K}\colon
  \mathbb A_K^\times/K^\times\longrightarrow\Gamma
\]
be the quotient of the global Artin map normalized so that uniformizers
map to arithmetic Frobenius.  For such a character, let
\[
  \bar\eta
  :=
  \iota_\infty\circ\iota_p^{-1}\circ\eta^{-1}
  \circ\operatorname{Art}^{\mathrm{ar}}_{L/K},
\]
a finite-order complex Hecke character of $K$, and set
$W(\eta):=W(\bar\eta)$ with $W$ as in \cite[\S2.2]{CGS23}. Define
\[
   I^{\mathrm{PR}}(\eta)
   :=
   \frac{W(\eta)\cdot p^{(m+n)/2}}
        {\alpha_p^{m+n}\cdot \Omega_{E/K}}
   \qquad\text{if $m+n>0$,}
\]
and
\[
   I^{\mathrm{PR}}(1)
   :=
   (1-\alpha_p^{-1})^4\cdot \Omega_{E/K}^{-1}.
\]
With these conventions, the interpolation formula of Perrin--Riou, in
the normalisation of Castella, Grossi and Skinner \cite{CGS23}, takes the following
form.

\begin{proposition}[Perrin--Riou--CGS interpolation]
\label{prop:PR-interpolation}
Under Assumption~\ref{ass:BCS}, every CGS-admissible finite-order
character $\eta$ satisfies
\[
   \eta\bigl(\mathcal L^{\mathrm{PR}}_{E/L}\bigr)
   =
   I^{\mathrm{PR}}(\eta)\cdot 
   L(E/K,\bar\eta,1).
\]
\end{proposition}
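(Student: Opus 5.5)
The proposition is essentially \cite[Thm.~2.2.1]{CGS23}, rewritten in our notation. The plan is therefore not to re-derive Perrin--Riou's construction. Instead I would check that the hypotheses and normalisations of \cite{CGS23} agree with ours, and then translate their interpolation formula term by term. Assumption~\ref{ass:BCS} supplies what \cite[\S2]{CGS23} needs. Namely, $(\mathrm{Heeg})$ makes $N$ a product of split primes; $(\mathrm{spl})$ fixes the prime $v$ above $p$ via $\iota_p$; and good ordinary reduction gives the unit root $\alpha_p$ used in the $p$-stabilisation of the newform $f$ attached to $E$. By \cite[Def.~2.2.2]{CGS23}, $\mathcal L^{\mathrm{PR}}_{E/L}$ is, up to the fixed choice of period, the element whose values at Hecke characters of $p$-power conductor are governed by \cite[Thm.~2.2.1]{CGS23}. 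Integrality in $\Lambda$ is recalled in \cite[\S1.4]{BCS25}.

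The key steps, in order, are as follows.

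First, I would match characters. A finite-order $\eta\colon\Gamma\to\overline{\mathbb Q}_p^{\times}$ is pulled back along $\operatorname{Art}^{\mathrm{ar}}_{L/K}$ to an idele class character and transported to $\mathbb C$ via $\iota_\infty\circ\iota_p^{-1}$. The inverse in the definition of $\bar\eta$ compensates for the geometric-versus-arithmetic Frobenius convention in \cite{CGS23}. I would verify this on a single uniformiser at a split prime outside $p$, where both sides are explicit. Since $L/K$ is unramified outside $p$, the conductor of $\bar\eta$ is $v^m\bar v^{\,n}$. This is exactly the range of characters covered by \cite[Thm.~2.2.1]{CGS23}, which explains the CGS-admissibility condition: nontrivial unramified characters are excluded because the modified Euler factor degenerates differently there.

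Second, for $m+n>0$ I would compare the transcendental and local factors. The period in \cite{CGS23} is the Petersson norm $\langle f,f\rangle$. This is related to $\int_{E(\mathbb C)}\omega_E\wedge\mathrm i\overline{\omega_E}$ by the Manin constant and the modular degree, and the normalisation of \cite[Def.~2.2.2]{CGS23} absorbs these factors; the $\sqrt{|D_K|}$ comes from their choice of $\Omega_{E/K}$. The Euler-like factor at $p$ for ramified $\eta$ collapses to $p^{(m+n)/2}\alpha_p^{-(m+n)}$ times the epsilon factor, which they package as $W(\bar\eta)$. Third, for $\eta=1$, I would check that the modified Euler factor at the two primes $v,\bar v$ reduces to $(1-\alpha_p^{-1})^4$. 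Each of $v$ and $\bar v$ contributes $(1-\beta_p/p)^2=(1-\alpha_p^{-1})^2$, using $\beta_p=p/\alpha_p$.

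I expect the main obstacle to be bookkeeping rather than any conceptual point. The risk is sign and inversion conventions: $\eta$ versus $\eta^{-1}$, arithmetic versus geometric Frobenius, and whether $W$ denotes the root number of $\bar\eta$ or of its conjugate. A mismatch there would replace $L(E/K,\bar\eta,1)$ by $L(E/K,\bar\eta^{-1},1)$. These two values agree up to the complex-conjugation symmetry of the Rankin $L$-function, since $E$ is defined over $\mathbb Q$, but the accompanying $W$ does not simply carry over. I would settle this by testing the formula against the trivial character and one explicit character of conductor $v$.
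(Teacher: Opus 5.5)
Your proposal matches the paper's approach: the paper also deduces the proposition directly from \cite[Thm.~2.2.1, Def.~2.2.2 and Rem.~2.2.3]{CGS23}, and explains the appearance of $\bar\eta$ as a consequence of the convention for passing from finite-order $p$-adic characters to Hecke characters. Your checks of characters, periods and the factor $(1-\alpha_p^{-1})^4$ are a more detailed version of that same citation, not a different route.
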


\begin{proof}
This follows from
\cite[Thm.~2.2.1, Def.~2.2.2 and Rem.~2.2.3]{CGS23}. The occurrence of
$\bar\eta$ in the complex $L$-value reflects our convention for
passing between finite-order $p$-adic characters and the corresponding
Hecke characters.
\end{proof}

\section{Specialisation to a $\mathbb Z_p$-line}
\label{sec:linewise-specialisation}

Let $L'/K\subset L/K$ be a $\mathbb Z_p$-line and put $\Psi=\operatorname{Gal}(L/L')$. For each finite place $w$ of $K$, let $\Gamma_w\subseteq\Gamma$
denote the decomposition group at a place of $L$ above $w$, and put
\[
   \Psi_w=\Psi\cap\Gamma_w.
\]
Since $\Gamma$ is abelian, these groups are independent of the choice
of the place above $w$.

The specialisation formula of the first-named author
\cite[Thm.~1]{Tan14} involves an explicit product of local ideals. We
first describe this factor in the present setting, following
\cite[Defs.~1.4, 1.5 and~1.7]{Tan14}. 

\begin{definition}[Local specialisation factor]
\label{def:local-factor}
The local factors and their product are defined as follows.

\begin{enumerate}
\item[(i)] \emph{Places away from $p$.}
For each finite place $w\nmid p$, let $\mathbb F_w$ be its residue
field and let $\Phi_w$ denote the group of connected components of
the special fibre of the N\'eron model of $E/K_w$. Put
\[
   c_w(E/K_w)=\#\Phi_w(\mathbb F_w),
   \qquad
   c_w^{(p)}=p^{v_p(c_w(E/K_w))}.
\]
Thus, $c_w^{(p)}$ is the $p$-part of the Tamagawa number
$c_w(E/K_w)$, and it generates the ideal $\pi_w$ of
\cite[Def.~1.4]{Tan14}.

\item[(ii)] \emph{Places above $p$.}
For each $w\mid p$, let $\alpha_w$ be the ordinary unit Frobenius
eigenvalue at $w$. Since $E$ is defined over $\mathbb Q$ and $p$
splits in $K$, one has
\[
   \alpha_v=\alpha_{\bar v}=\alpha_p\in\mathbb Z_p^\times.
\]
If $L'_w/K_w$ is unramified, let
\[
   [w]_{L'}\in\operatorname{Gal}(L'/K)=\Gamma'
\]
denote its Frobenius element. We then define
\[
   e_w(L')
   =
   \begin{cases}
   (1-\alpha_w^{-1}[w]_{L'})
   (1-\alpha_w^{-1}[w]_{L'}^{-1}),
      &\text{if $L'_w/K_w$ is unramified},\\[4pt]
   1, &\text{if $L'_w/K_w$ is ramified}.
   \end{cases}
\]

\item[(iii)] \emph{The total factor.}
We set
\[
   \theta_{L/L'}
   =
   \prod_{\substack{w\nmid p\\ \Psi_w\neq 0}}
      c_w^{(p)}
   \cdot
   \prod_{w\mid p}e_w(L')
   \in\Lambda'
\]
and define
\[
   \vartheta_{L/L'}
   =
   (\theta_{L/L'})
   \subseteq\Lambda'.
\]
Only finitely many factors in the first product are different from
$1$.
\end{enumerate}
\end{definition}

\begin{remark}
Notice that the
split-multiplicative case in \cite[Def.~1.7(c)]{Tan14} does not arise here,
since every place ramified in $L/K$ lies above $p$, where $E$ has
good ordinary reduction.
\end{remark}

The extension $L/K$ is unramified outside the primes above $p$, and $E$
has good ordinary reduction at every ramified place. Hence
\cite[Thm.~1]{Tan14} applies with $A=E$, $d=2$, and $e=1$.
The following result is the form of the specialisation theorem that we
shall use.

\begin{theorem}[Specialisation of characteristic ideals]
\label{thm:Tan-specialisation}
Under Assumption~\ref{ass:BCS}, the global factor in
\cite[Thm.~1]{Tan14} satisfies
\[
   \varrho_{L/L'}=(1).
\]
Consequently,
\[
   \operatorname{pr}_{L'}
      \bigl(\operatorname{char}_{\Lambda}(X_L)\bigr)
   =
   \vartheta_{L/L'}\,
\cdot    \operatorname{char}_{\Lambda'}(X_{L'}).
\]
Moreover, $\vartheta_{L/L'}\neq(0)$.
\end{theorem}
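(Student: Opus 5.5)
The plan is to read off the global factor $\varrho_{L/L'}$ from its definition in \cite[Thm.~1]{Tan14}, where it is built from the $\Psi$-invariants and coinvariants of the $p$-primary torsion of $E$ over $L$ (and $L'$). Concretely, $\varrho_{L/L'}$ is expressed through the modules $E(L)[p^\infty]$ and the corresponding objects attached to the dual Selmer module. So the first step is to check that these modules are trivial. By $(\mathrm{sur})$, $E[p]$ is an irreducible $G_{\mathbb Q}$-module, and in fact $\overline{\rho}_{E,p}(G_K)$ still contains $\mathrm{SL}_2(\mathbb F_p)$, because $K/\mathbb Q$ is quadratic and $p>3$. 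Since $L/K$ is abelian pro-$p$, the image of $G_L$ contains the commutator subgroup of the image of $G_K$, hence contains $\mathrm{SL}_2(\mathbb F_p)$ for $p>3$. Therefore $E(L)[p]=0$, and consequently $E(L)[p^\infty]=0$ and $E(L')[p^\infty]=0$. The same argument applied to the dual representation (via the Weil pairing $E[p]\simeq E[p]^\vee(1)$) handles any twisted or dual torsion terms entering $\varrho_{L/L'}$. All those terms are then trivial and their characteristic ideals equal $(1)$, so $\varrho_{L/L'}=(1)$.

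The second step is to substitute this into \cite[Thm.~1]{Tan14}, taking $A=E$, $d=2$ and $e=1$; the hypotheses were already verified in the paragraph preceding the theorem. That theorem reads
\[
\operatorname{pr}_{L'}\bigl(\operatorname{char}_\Lambda(X_L)\bigr)=\varrho_{L/L'}\cdot\vartheta_{L/L'}\cdot\operatorname{char}_{\Lambda'}(X_{L'}).
\]
Here the local ideal is the product of the $\pi_w$ over the places $w\nmid p$ with $\Psi_w\neq0$, together with the factors attached to the places above $p$ in \cite[Defs.~1.5,~1.7]{Tan14}. Definition~\ref{def:local-factor} records that this product is generated by $\theta_{L/L'}$, and the split-multiplicative case is absent. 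With $\varrho_{L/L'}=(1)$, the displayed identity follows, including when $X_{L'}$ is non-torsion, by the zero-ideal convention.

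For the nonvanishing, each $c_w^{(p)}$ is a nonzero integer, so it suffices to show that each $e_w(L')\neq0$ in $\Lambda'\simeq\mathbb Z_p[[T]]$. If $L'_w/K_w$ is ramified, $e_w(L')=1$. Otherwise $[w]_{L'}=\gamma$ is some element of $\Gamma'$. If $\gamma=1$, the factor equals $(1-\alpha_p^{-1})^2$, which is nonzero because $\alpha_p\neq1$ (the Hasse bound gives $|a_p|<p+1$ unless… and the Weil bound makes $\alpha_p=1$ impossible). If $\gamma\neq1$, then $1-\alpha_p^{-1}\gamma^{\pm1}$ is a nonzero element of the domain $\Lambda'$. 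This is exactly the criterion of \cite[Lem.~1.8]{Tan14}.

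The main obstacle I expect is the first step: matching the precise definition of $\varrho_{L/L'}$ in \cite{Tan14} with torsion-type invariants that the big-image argument kills. If $\varrho_{L/L'}$ also involves the $\Psi$-coinvariants of the Selmer groups and not just of $E(L)[p^\infty]$, I would first try to reduce those terms to $H^i(\Psi,E(L)[p^\infty])$ through the control-theorem exact sequences, and then apply the vanishing $E(L)[p]=0$.
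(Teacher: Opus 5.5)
Your proposal is correct and takes the paper's route: the big-image hypothesis kills the $p$-primary torsion, so the global factor in \cite[Thm.~1]{Tan14} is $(1)$, and you then substitute into that formula. The differences are minor: the paper identifies $\varrho_{L/L'}$ directly as the ideal attached to $T_p\bigl(E[p^\infty](L')\bigr)$, so the Selmer-coinvariant reduction you worried about is unnecessary, and it shows $E[p](L')=0$ via irreducibility of $E[p]$ over $G_K$ plus non-abelian image; you instead obtain the stronger $E(L)[p]=0$ from the perfectness of $\mathrm{SL}_2(\mathbb F_p)$, and you verify $\theta_{L/L'}\neq0$ by hand rather than citing \cite[Lem.~1.8]{Tan14}, where your $\alpha_p\neq1$ follows simply from $1-a_p+p=\#E(\mathbb F_p)>0$.
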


\begin{proof}
The specialisation formula of the first-named author
\cite[Thm.~1]{Tan14} gives
\begin{equation}\label{eq:Tan-general}
   \operatorname{char}_{\Lambda'}(X_{L'})
   \,\cdot \vartheta_{L/L'}
   =
   \varrho_{L/L'}\,\cdot 
   \operatorname{pr}_{L'}
      \bigl(\operatorname{char}_{\Lambda}(X_L)\bigr),
\end{equation}
where $\varrho_{L/L'}$ is the global factor defined in
\cite[Def.~1.3]{Tan14}. In the present case, this is the ideal associated
to the $\Lambda'$-module $T_p\bigl(E[p^\infty](L')\bigr),$ the $p$-adic Tate module of $E[p^\infty](L')$. We claim that
this module is zero. Indeed, condition $(\mathrm{sur})$ implies that the
image $H:=\overline{\rho}_{E,p}(G_K)$ is a normal subgroup of
$\operatorname{GL}_2(\mathbb F_p)$ of index at most $2$. Since $p>3$,
one has
\[
   [\operatorname{GL}_2(\mathbb F_p),
     \operatorname{GL}_2(\mathbb F_p)]
   =
   \operatorname{SL}_2(\mathbb F_p),
\]
and hence $H$ contains $\operatorname{SL}_2(\mathbb F_p)$.
In particular, $E[p]$ is irreducible as a $G_K$-module and the image
of $G_K$ on $E[p]$ is non-abelian.

Since $L'/K$ is Galois, $E[p](L')$ is a $G_K$-stable subspace of
$E[p]$. If it were nonzero, irreducibility would give $E[p](L')=E[p].$ The action of $G_K$ on $E[p]$ would then factor through the abelian
group $\operatorname{Gal}(L'/K)$, contradicting the non-abelianity of
its image. It follows that
\[
   E[p](L')=0
   \qquad\text{and hence}\qquad
   T_p\bigl(E[p^\infty](L')\bigr)=0.
\]
Thus $\varrho_{L/L'}=(1)$, and \eqref{eq:Tan-general} gives the
required identity.

Finally, $\vartheta_{L/L'}\neq(0)$ follows from
\cite[Lem.~1.8]{Tan14}. Indeed, the local factor can vanish only in
one of the split-multiplicative cases described there, whereas every
place ramified in $L/K$ lies above $p$, where $E$ has good
ordinary reduction.
\end{proof}

We shall also need the following consequence of the explicit
description of $\theta_{L/L'}$.

\begin{lemma}[Finite-order non-vanishing]
\label{lem:Tan-factor-nonvanishing}
Under Assumption~\ref{ass:BCS}, for every finite-order character $\eta\colon\Gamma'\longrightarrow
   \overline{\mathbb Q}_p^{\,\times}$, one has $\eta(\theta_{L/L'})\neq 0.$
\end{lemma}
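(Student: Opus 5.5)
Since $\eta$ extends to a ring homomorphism $\Lambda'\to\overline{\mathbb Q}_p$ (continuous, $\mathbb Z_p$-linear), it suffices to evaluate it on each factor of the product in Definition~\ref{def:local-factor}(iii) and show that none of the images vanishes.

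\emph{Places away from $p$.} Each factor $c_w^{(p)}$ is a power of $p$, hence a nonzero element of $\mathbb Z_p$, so $\eta(c_w^{(p)})=c_w^{(p)}\neq 0$. Only finitely many of these factors differ from $1$, so their product has nonzero image.

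\emph{Places above $p$.} If $L'_w/K_w$ is ramified, then $e_w(L')=1$ and there is nothing to prove. If $L'_w/K_w$ is unramified, put $\zeta=\eta([w]_{L'})$. Since $\eta$ has finite order, $\zeta$ is a root of unity in $\overline{\mathbb Q}_p$, and
\[
   \eta\bigl(e_w(L')\bigr)
   =
   (1-\alpha_p^{-1}\zeta)(1-\alpha_p^{-1}\zeta^{-1}).
\]
A factor vanishes only if $\alpha_p=\zeta^{\pm1}$, that is, only if $\alpha_p$ is a root of unity. To rule this out, use that $\alpha_p$ is a root of $X^2-a_pX+p$ together with the Hasse bound $|a_p|\le 2\sqrt p$. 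The two roots have complex absolute value $\sqrt p\neq 1$ under any embedding into $\mathbb C$. A root of unity has absolute value $1$ under every complex embedding, so $\alpha_p$, which is algebraic, cannot be a root of unity. (One could instead argue $p$-adically from $\alpha_p\beta_p=p$ with $\alpha_p$ a unit, but that argument alone does not exclude $\alpha_p$ being a root of unity; the archimedean argument is the decisive one.)

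Taking the product, $\eta(\theta_{L/L'})\neq0$. The only point needing care is the last step, namely that $\alpha_p$, viewed in $\overline{\mathbb Q}_p$ via $\iota_p$, is an algebraic number whose complex conjugates all have absolute value $\sqrt p$. This holds because $\alpha_p$ is a root of an integer polynomial with negative discriminant $a_p^2-4p<0$.
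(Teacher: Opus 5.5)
Your proof is correct and matches the paper's argument: you evaluate $\theta_{L/L'}$ factor by factor, note that the $c_w^{(p)}$ are nonzero constants, and observe that $\eta(e_w(L'))$ can vanish only if $\alpha_p=\eta([w]_{L'})^{\pm1}$ is a root of unity. The only difference is that you rule this out directly via the Hasse bound (every complex conjugate of $\alpha_p$ has absolute value $\sqrt p$), whereas the paper cites \cite[(3)]{Tan14} for the fact that $\alpha_w$ is not a root of unity.
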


\begin{proof}
The factors $c_w^{(p)}$ are nonzero constants, so it is enough to
consider the factors $e_w(L')$ for $w\mid p$. If $L'_w/K_w$ is
ramified, then $e_w(L')=1$. If it is unramified, then
\[
   e_w(L')
   =
   (1-\alpha_w^{-1}[w]_{L'})
   (1-\alpha_w^{-1}[w]_{L'}^{-1}).
\]
Since $\eta$ has finite order, $\eta([w]_{L'})$ is a root of
unity. On the other hand, $\alpha_w$ is not a root of unity by
\cite[(3)]{Tan14}. It follows that
\[
   \eta([w]_{L'})\neq\alpha_w
   \qquad\text{and}\qquad
   \eta([w]_{L'})\neq\alpha_w^{-1},
\]
and hence $\eta(e_w(L'))\neq0$. The result now follows from
Definition~\ref{def:local-factor}.
\end{proof}

We now combine Theorems~\ref{thm:BCS} and
\ref{thm:Tan-specialisation}. The following result proves
Theorem~\ref{thm:intro-main}(i) and also records the interpolation
property of the linewise element.

\begin{theorem}[Linewise specialisation]
\label{thm:linewise-specialisation}
Under Assumption~\ref{ass:BCS}, the following assertions hold.

\begin{enumerate}
\item[(i)]
The linewise element
\[
   \mathcal L_{E/L'}
   =
   \theta_{L/L'}^{-1}\cdot 
   \operatorname{pr}_{L'}
      \bigl(\mathcal L^{\mathrm{PR}}_{E/L}\bigr),
\]
initially defined in $\operatorname{Frac}(\Lambda')$, belongs to
$\Lambda'$ and satisfies
\[
   \bigl(\mathcal L_{E/L'}\bigr)
   =
   \operatorname{char}_{\Lambda'}(X_{L'}).
\]

\item[(ii)]
Let $\eta\colon\Gamma'\longrightarrow
   \overline{\mathbb Q}_p^{\,\times}$ be a finite-order character, and let $\widetilde{\eta}\colon\Gamma\longrightarrow
   \overline{\mathbb Q}_p^{\,\times}$ denote its inflation along the quotient map
$\Gamma\twoheadrightarrow\Gamma'$. If
$\widetilde{\eta}$ is CGS-admissible, then
\[
   \eta(\mathcal L_{E/L'})
   =
   \eta(\theta_{L/L'})^{-1}\cdot 
   I^{\mathrm{PR}}(\widetilde{\eta})\cdot 
   L(E/K,\overline{\widetilde{\eta}},1).
\]
\end{enumerate}
\end{theorem}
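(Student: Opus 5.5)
Part (i) comes from combining Theorem~\ref{thm:BCS} with Theorem~\ref{thm:Tan-specialisation}. Apply $\operatorname{pr}_{L'}$ to \eqref{eq:BCS-main}. Since $\operatorname{pr}_{L'}$ is a ring homomorphism, the image of a principal ideal is generated by the image of its generator, so
\[
   \bigl(\operatorname{pr}_{L'}(\mathcal L^{\mathrm{PR}}_{E/L})\bigr)
   =\operatorname{pr}_{L'}\bigl(\operatorname{char}_{\Lambda}(X_L)\bigr)
   =(\theta_{L/L'})\cdot\operatorname{char}_{\Lambda'}(X_{L'}).
\]
Everything else is an argument in the ring $\Lambda'\simeq\mathbb Z_p[[T]]$, which is a unique factorisation domain. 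There are two cases.

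\emph{Case $X_{L'}$ not torsion.} By convention $\operatorname{char}_{\Lambda'}(X_{L'})=(0)$. The display then forces $\operatorname{pr}_{L'}(\mathcal L^{\mathrm{PR}}_{E/L})=0$, so $\mathcal L_{E/L'}=0\in\Lambda'$ and both sides of (i) are $(0)$.

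\emph{Case $X_{L'}$ torsion.} Here $\operatorname{char}_{\Lambda'}(X_{L'})=(f)$ for some nonzero $f\in\Lambda'$. The display gives $\operatorname{pr}_{L'}(\mathcal L^{\mathrm{PR}}_{E/L})=u\,\theta_{L/L'}f$ for a unit $u\in\Lambda'^{\times}$. Because $\theta_{L/L'}\neq0$ (Theorem~\ref{thm:Tan-specialisation}) and $\Lambda'$ is a domain, we may divide in $\operatorname{Frac}(\Lambda')$ to get $\mathcal L_{E/L'}=uf\in\Lambda'$. Hence $(\mathcal L_{E/L'})=(f)=\operatorname{char}_{\Lambda'}(X_{L'})$.

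Part (ii) is a direct evaluation. A finite-order character $\eta$ of $\Gamma'$ extends to a continuous ring homomorphism $\Lambda'\to\overline{\mathbb Q}_p$. Its composite with $\operatorname{pr}_{L'}$ is the extension of $\widetilde\eta$ to $\Lambda$, because both are continuous and agree on group elements of $\Gamma$. By (i), $\mathcal L_{E/L'}\in\Lambda'$, and it satisfies
\[
   \theta_{L/L'}\cdot\mathcal L_{E/L'}=\operatorname{pr}_{L'}\bigl(\mathcal L^{\mathrm{PR}}_{E/L}\bigr)
\]
in $\Lambda'$. Applying $\eta$ gives
\[
   \eta(\theta_{L/L'})\,\eta(\mathcal L_{E/L'})=\widetilde\eta\bigl(\mathcal L^{\mathrm{PR}}_{E/L}\bigr).
\]
Lemma~\ref{lem:Tan-factor-nonvanishing} says $\eta(\theta_{L/L'})\neq0$, so we can divide by it. Proposition~\ref{prop:PR-interpolation}, applied to the CGS-admissible character $\widetilde\eta$, then turns the right-hand side into the claimed formula.

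The only point needing care is that $\operatorname{pr}_{L'}$ carries the principal ideal $\operatorname{char}_\Lambda(X_L)$ to the principal ideal generated by the image of a generator. Theorem~\ref{thm:Tan-specialisation} should be read with this meaning of $\operatorname{pr}_{L'}$ on ideals, consistent with \cite{Tan14}. I expect no genuine obstacle beyond this bookkeeping; all the substantive input (BCS, the vanishing of $\varrho_{L/L'}$, and the non-vanishing of $\theta_{L/L'}$) is already in place.
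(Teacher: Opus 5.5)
Your proposal is correct and follows the paper's proof: you apply $\operatorname{pr}_{L'}$ to the BCS equality, invoke Theorem~\ref{thm:Tan-specialisation}, divide by the nonzero $\theta_{L/L'}$ in $\operatorname{Frac}(\Lambda')$, and evaluate at $\eta$ using Lemma~\ref{lem:Tan-factor-nonvanishing} and Proposition~\ref{prop:PR-interpolation}. The only differences are presentational: you split explicitly into the torsion and non-torsion cases, and you apply $\eta$ to the identity $\theta_{L/L'}\cdot\mathcal L_{E/L'}=\operatorname{pr}_{L'}(\mathcal L^{\mathrm{PR}}_{E/L})$ in $\Lambda'$ rather than to an element of $\operatorname{Frac}(\Lambda')$, which is slightly more careful than the paper's wording but is the same argument.
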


\begin{remark}
The equality in Theorem~\ref{thm:linewise-specialisation}(i) is
understood to include the zero ideal. More precisely, following the
convention of \cite{Tan14}, one has
\[
   \operatorname{char}_{\Lambda'}(X_{L'})=(0)
\]
whenever $X_{L'}$ is not torsion over $\Lambda'$. In this case,
Theorem~\ref{thm:linewise-specialisation}(i) asserts that
$\mathcal L_{E/L'}=0$. Thus the theorem applies uniformly to both
exceptional and non-exceptional lines, without any prior torsion
assumption on $X_{L'}$.
\end{remark}
\begin{proof}
We first prove (i). By Theorem~\ref{thm:BCS}, one has
\[
   \bigl(\mathcal L^{\mathrm{PR}}_{E/L}\bigr)
   =
   \operatorname{char}_{\Lambda}(X_L).
\]
Applying $\operatorname{pr}_{L'}$ and then using
Theorem~\ref{thm:Tan-specialisation} gives
\[
   \left(
      \operatorname{pr}_{L'}
         \bigl(\mathcal L^{\mathrm{PR}}_{E/L}\bigr)
   \right)
   =
   \vartheta_{L/L'}\,\cdot 
   \operatorname{char}_{\Lambda'}(X_{L'}).
\]
Since
\[
   \vartheta_{L/L'}=(\theta_{L/L'})
   \qquad\text{and}\qquad
   \theta_{L/L'}\neq0,
\]
we may divide this equality in
$\operatorname{Frac}(\Lambda')$ to obtain
\[
   \left(
      \theta_{L/L'}^{-1}\cdot 
      \operatorname{pr}_{L'}
         \bigl(\mathcal L^{\mathrm{PR}}_{E/L}\bigr)
   \right)
   =
   \operatorname{char}_{\Lambda'}(X_{L'}).
\]
The right-hand side is an ideal of $\Lambda'$, possibly the zero
ideal. It follows that
\[
   \theta_{L/L'}^{-1}\cdot 
   \operatorname{pr}_{L'}
      \bigl(\mathcal L^{\mathrm{PR}}_{E/L}\bigr)
   \in\Lambda'.
\]
By the definition of $\mathcal L_{E/L'}$, this proves both its
integrality and the equality
\[
   \bigl(\mathcal L_{E/L'}\bigr)
   =
   \operatorname{char}_{\Lambda'}(X_{L'}),
\]
and hence proves (i).

We next prove (ii). Let $\eta$ be as in that assertion. By
Lemma~\ref{lem:Tan-factor-nonvanishing}, one has
\[
   \eta(\theta_{L/L'})\neq0.
\]
Evaluating the definition of the linewise element at $\eta$
therefore gives
\[
   \eta(\mathcal L_{E/L'})
   =
   \eta(\theta_{L/L'})^{-1}
  \cdot  \eta\left(
      \operatorname{pr}_{L'}
         \bigl(\mathcal L^{\mathrm{PR}}_{E/L}\bigr)
   \right).
\]
Since $\widetilde{\eta}$ is the inflation of $\eta$ along
$\Gamma\twoheadrightarrow\Gamma'$, the corresponding evaluation maps
satisfy
\[
   \eta\left(
      \operatorname{pr}_{L'}
         \bigl(\mathcal L^{\mathrm{PR}}_{E/L}\bigr)
   \right)
   =
   \widetilde{\eta}
      \bigl(\mathcal L^{\mathrm{PR}}_{E/L}\bigr).
\]
As $\widetilde{\eta}$ is CGS-admissible,
Proposition~\ref{prop:PR-interpolation} implies that
\[
   \widetilde{\eta}
      \bigl(\mathcal L^{\mathrm{PR}}_{E/L}\bigr)
   =
   I^{\mathrm{PR}}(\widetilde{\eta})
   \cdot L(E/K,\overline{\widetilde{\eta}},1).
\]
Combining the preceding equalities gives
\[
   \eta(\mathcal L_{E/L'})
   =
   \eta(\theta_{L/L'})^{-1}
  \cdot  I^{\mathrm{PR}}(\widetilde{\eta})
   \cdot L(E/K,\overline{\widetilde{\eta}},1),
\]
as required.
\end{proof}

Theorem~\ref{thm:intro-main}(ii) is an immediate consequence.
\begin{corollary}
\label{cor:exceptional-equivalences}
Under Assumption~\ref{ass:BCS}, the following conditions are equivalent:
\begin{enumerate}
\item[(i)] $L'$ is exceptional;
\item[(ii)] $\mathcal L_{E/L'}=0$;
\item[(iii)] $X_{L'}$ is not torsion over $\Lambda'$.
\end{enumerate}
\end{corollary}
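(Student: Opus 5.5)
The plan is to read all three equivalences off the identity
\[
   \bigl(\mathcal L_{E/L'}\bigr)=\operatorname{char}_{\Lambda'}(X_{L'})
\]
from Theorem~\ref{thm:linewise-specialisation}(i), together with the non-vanishing $\theta_{L/L'}\neq0$ from Theorem~\ref{thm:Tan-specialisation}. The key structural fact is that $\Lambda'\simeq\mathbb Z_p[[T]]$ is an integral domain. Hence $\operatorname{Frac}(\Lambda')$ is a field, and multiplication by the nonzero element $\theta_{L/L'}^{-1}$ is injective there.

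\emph{(i)$\Leftrightarrow$(ii).} By definition,
\[
   \mathcal L_{E/L'}=\theta_{L/L'}^{-1}\cdot\operatorname{pr}_{L'}\bigl(\mathcal L^{\mathrm{PR}}_{E/L}\bigr)
\]
in $\operatorname{Frac}(\Lambda')$. Since $\theta_{L/L'}\neq0$, this element vanishes exactly when $\operatorname{pr}_{L'}(\mathcal L^{\mathrm{PR}}_{E/L})=0$. That is precisely the definition of $L'$ being exceptional.

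\emph{(ii)$\Leftrightarrow$(iii).} Here I use the convention of \cite{Tan14}: $\operatorname{char}_{\Lambda'}(X_{L'})=(0)$ when $X_{L'}$ is not torsion. Conversely, when $X_{L'}$ is a finitely generated torsion module over the two-dimensional regular local ring $\Lambda'$, its characteristic ideal is the nonzero ideal $\prod_{\mathfrak p}\mathfrak p^{\operatorname{length}(X_{L'})_{\mathfrak p}}$ taken over height-one primes. Finite generation of $X_{L'}$ is recorded in \S\ref{sec:BCS}. So $\operatorname{char}_{\Lambda'}(X_{L'})=(0)$ if and only if $X_{L'}$ is non-torsion. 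The displayed identity then says that $(\mathcal L_{E/L'})=(0)$, i.e.\ $\mathcal L_{E/L'}=0$, if and only if $X_{L'}$ is non-torsion.

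There is no real obstacle. The only point needing care is the convention for characteristic ideals of non-torsion modules. One has to check that Theorem~\ref{thm:linewise-specialisation}(i), via \cite[Thm.~1]{Tan14}, was proved uniformly under this convention, including the case where both sides are zero. The remark following that theorem confirms this.
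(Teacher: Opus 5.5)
Your proposal is correct and matches the paper's proof: (i)$\Leftrightarrow$(ii) follows from $\theta_{L/L'}\neq0$ in the domain $\Lambda'$, and (ii)$\Leftrightarrow$(iii) follows from the identity $(\mathcal L_{E/L'})=\operatorname{char}_{\Lambda'}(X_{L'})$ of Theorem~\ref{thm:linewise-specialisation}(i) together with the convention of \cite{Tan14}. Your remark that the characteristic ideal of a finitely generated torsion $\Lambda'$-module is nonzero spells out a step the paper leaves implicit.
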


\begin{proof}
Since $\theta_{L/L'}\neq0$, the definition of the linewise element
gives
\[
   \mathcal L_{E/L'}=0
   \quad\Longleftrightarrow\quad
   \operatorname{pr}_{L'}
      \bigl(\mathcal L^{\mathrm{PR}}_{E/L}\bigr)=0.
\]
The right-hand condition is precisely the assertion that $L'$ is
exceptional. On the other hand,
Theorem~\ref{thm:linewise-specialisation} gives
\[
   \bigl(\mathcal L_{E/L'}\bigr)
   =
   \operatorname{char}_{\Lambda'}(X_{L'}).
\]
By the convention of \cite{Tan14}, this ideal is zero if and only if
$X_{L'}$ is not torsion over $\Lambda'$. This proves all the stated
equivalences.
\end{proof}

\section{Exceptional lines and distinguished specialisations}
\label{sec:exceptional-lines}

We first prove Theorem~1.1(iii). Although this also follows from \cite[Thm.~8]{Tan14}, in the present rank-two setting it admits the following direct proof.

\begin{proposition}[Finiteness of exceptional lines]
\label{prop:finiteness-exceptional}
Under Assumption~\ref{ass:BCS}, there are only finitely many
exceptional $\mathbb Z_p$-lines in $L/K$.
\end{proposition}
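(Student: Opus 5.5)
Fix topological generators $\gamma_1,\gamma_2$ of $\Gamma\simeq\mathbb Z_p^2$ and identify $\Lambda=\mathbb Z_p[[T_1,T_2]]$ with $T_i=\gamma_i-1$. A $\mathbb Z_p$-line $L'$ corresponds to $\Psi=\operatorname{Gal}(L/L')\simeq\mathbb Z_p$, topologically generated by $\psi=\gamma_1^{a}\gamma_2^{b}$ with $(a,b)\in\mathbb Z_p^2$ primitive, i.e.\ not both in $p\mathbb Z_p$. Distinct lines give distinct subgroups $\Psi$. The kernel of $\operatorname{pr}_{L'}$ is the principal prime ideal $\mathfrak p_\Psi=(\psi-1)$, so $\Lambda/\mathfrak p_\Psi\simeq\Lambda'$ is a domain. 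Hence $L'$ is exceptional exactly when $\mathcal L^{\mathrm{PR}}_{E/L}\in\mathfrak p_\Psi$, which means that the height-one prime $\mathfrak p_\Psi$ divides $\mathcal L^{\mathrm{PR}}_{E/L}$.

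Theorem~\ref{thm:BCS} says $X_L$ is $\Lambda$-torsion and $(\mathcal L^{\mathrm{PR}}_{E/L})=\operatorname{char}_\Lambda(X_L)\neq(0)$, so $\mathcal L^{\mathrm{PR}}_{E/L}\neq0$. Since $\Lambda$ is a regular local ring of dimension $2$, it is a UFD. The nonzero element $\mathcal L^{\mathrm{PR}}_{E/L}$ therefore has only finitely many prime divisors up to associates, namely the height-one primes in the support of its divisor.

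It remains to check that $\Psi\mapsto\mathfrak p_\Psi$ is injective, since then distinct exceptional lines give distinct prime factors. Suppose $(\psi-1)=(\psi'-1)$. Map to $\Lambda/(\psi-1)\simeq\mathbb Z_p[[\Gamma/\Psi]]$. There $\psi'-1$ vanishes, so $\psi'$ maps to the identity in $\Gamma/\Psi$, because group elements of $\Gamma/\Psi$ are $\mathbb Z_p$-linearly independent in the Iwasawa algebra. Thus $\psi'\in\Psi$, and by symmetry $\Psi=\Psi'$. This torsion-free quotient argument is the only point needing care. The main obstacle is making sure $\Psi$ is saturated, so that $\Gamma/\Psi\simeq\mathbb Z_p$ and $\psi-1$ is prime; this holds precisely because $\Gamma'\simeq\mathbb Z_p$.

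Consequently the number of exceptional lines is at most the number of distinct prime factors of $\mathcal L^{\mathrm{PR}}_{E/L}$, which is finite. One can even bound it by the total multiplicity of the factors of the form $\gamma-1$ appearing in its factorisation.
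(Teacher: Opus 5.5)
Your proposal is correct and follows essentially the same route as the paper. Both arguments use saturation of $\Psi$ to identify $\ker(\operatorname{pr}_{L'})$ with the height-one prime $(\psi-1)$, recover $\Psi$ from this kernel, and conclude because the nonzero element $\mathcal L^{\mathrm{PR}}_{E/L}$ lies in only finitely many height-one primes of the UFD $\Lambda$. One small slip: $\Lambda\simeq\mathbb Z_p[[T_1,T_2]]$ is a regular local ring of Krull dimension $3$, not $2$, but it is still a UFD, so your argument is unaffected.
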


\begin{proof}
By Theorem~\ref{thm:BCS},
\[
   \bigl(\mathcal L^{\mathrm{PR}}_{E/L}\bigr)
   =
   \operatorname{char}_{\Lambda}(X_L),
\]
where $X_L$ is torsion over $\Lambda$. In particular, $\mathcal L^{\mathrm{PR}}_{E/L}\neq0.$

Let $L'/K$ be an exceptional line and put $ \Psi=\operatorname{Gal}(L/L').$ Then
\[
   \mathcal L^{\mathrm{PR}}_{E/L}
   \in
   \ker\bigl(\operatorname{pr}_{L'}\colon\Lambda\to\Lambda'\bigr).
\]
Since $\Gamma/\Psi\simeq\Gamma'\simeq\mathbb Z_p$ is torsion-free, $\Psi$ is a saturated rank-one
$\mathbb Z_p$-submodule of $\Gamma$. We may therefore choose a
topological generator $\psi$ of $\Psi$ and an element
$\gamma\in\Gamma$ such that
\[
   \Gamma
   =
   \langle\psi\rangle_{\mathbb Z_p}
   \oplus
   \langle\gamma\rangle_{\mathbb Z_p}.
\]
With respect to this basis, one has
\[
   \Lambda
   \simeq
   \mathbb Z_p[[\psi-1,\gamma-1]],
   \qquad
   \Lambda'
   \simeq
   \mathbb Z_p[[\gamma-1]],
\]
and hence $\ker(\operatorname{pr}_{L'})=(\psi-1)$. This is a height-one prime ideal of $\Lambda$.

Distinct $\mathbb Z_p$-lines give distinct kernels. Indeed, $\Psi$
can be recovered from the kernel by the formula
\[
   \Psi
   =
   \left\{
      \delta\in\Gamma\ \middle|\
      \delta-1\in\ker(\operatorname{pr}_{L'})
   \right\}.
\]
It follows that the exceptional lines give distinct height-one prime
ideals of $\Lambda$, each of which contains
$\mathcal L^{\mathrm{PR}}_{E/L}$.

Finally, since $\Lambda\simeq\mathbb Z_p[[T_1,T_2]]$
is a noetherian unique factorisation domain, a nonzero element  is contained in only finitely many height-one prime ideals.
Since $\mathcal L^{\mathrm{PR}}_{E/L}\neq0$, only finitely many
kernels of the form $\ker(\operatorname{pr}_{L'})$ can contain it.
The result follows.
\end{proof}

\subsection{The cyclotomic line}

Let
$\mathbb Q_\infty/\mathbb Q$ be the cyclotomic
$\mathbb Z_p$-extension and put $L^+=K\mathbb Q_\infty.$ We also set
\[
   \Gamma^+=\operatorname{Gal}(L^+/K),
   \qquad
   \Lambda^+=\mathbb Z_p[[\Gamma^+]],
\]
and write $\operatorname{pr}^{+}\colon\Lambda\longrightarrow\Lambda^+$ for the corresponding projection. Since $p$ is odd, one has $K\cap\mathbb Q_\infty=\mathbb Q$ and restriction induces an isomorphism
\[
   \Gamma^+
   \simeq
   \operatorname{Gal}(\mathbb Q_\infty/\mathbb Q).
\]

Let $E^K$ be the quadratic twist of $E$ by the character associated
to $K/\mathbb Q$. Via the preceding isomorphism, we regard the
Mazur--Swinnerton-Dyer $p$-adic $L$-functions
\[
   \mathcal L_p^{\mathrm{MSD}}(E/\mathbb Q),
   \qquad
   \mathcal L_p^{\mathrm{MSD}}(E^K/\mathbb Q)
\]
as elements of $\Lambda^+$, with the normalisation of
\cite[Thm.~2.1.1]{CGS23}.

\begin{proposition}[Cyclotomic specialisation]
\label{prop:cyclotomic-specialisation}
Under Assumption~\ref{ass:BCS}, one has
\[
   \theta_{L/L^+}=1.
\]
Moreover, there exists a unit $u^+\in(\Lambda^+)^\times$ such that
\[
   \operatorname{pr}^{+}
      \bigl(\mathcal L^{\mathrm{PR}}_{E/L}\bigr)
   =
   u^+\,\cdot 
   \mathcal L_p^{\mathrm{MSD}}(E/\mathbb Q)
   \cdot \mathcal L_p^{\mathrm{MSD}}(E^K/\mathbb Q).
\]
In particular, $L^+/K$ is non-exceptional and
\[
   \mathcal L_{E/L^+}
   =
   \operatorname{pr}^{+}
      \bigl(\mathcal L^{\mathrm{PR}}_{E/L}\bigr).
\]
\end{proposition}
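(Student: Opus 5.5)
The proof has three parts: the computation of $\theta_{L/L^+}$, the factorisation of $\operatorname{pr}^+(\mathcal L^{\mathrm{PR}}_{E/L})$, and the non-vanishing of that factorisation. The last two assertions then follow formally.

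\emph{The local factor.} We compute $\theta_{L/L^+}$ directly from Definition~\ref{def:local-factor}. For places $w\mid p$: since $p$ is totally ramified in $\mathbb Q_\infty/\mathbb Q$ and $p$ splits in $K$, each of $v,\bar v$ is totally ramified in $L^+/K$. Hence $e_v(L^+)=e_{\bar v}(L^+)=1$.

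For places $w\nmid p$, we must show $\Psi_w=0$ whenever $c_w^{(p)}\neq1$, and in fact for all $w\nmid p$. Here $\Psi=\operatorname{Gal}(L/L^+)$, which is the anticyclotomic direction, on which complex conjugation acts by $-1$. The place $w$ is unramified in $L$, so $\Gamma_w$ is procyclic, generated by the Frobenius $\mathrm{Frob}_w$.
\begin{itemize}
\item If $w$ lies over a prime $\ell$ that is inert in $K$, the standard fact is that $\ell$ splits completely in the anticyclotomic extension. So $\mathrm{Frob}_w$ maps to $0$ in the anticyclotomic quotient, i.e.\ $\mathrm{Frob}_w$ lies in the cyclotomic-type direction, and we expect $\Gamma_w\cap\Psi=0$ because $\Psi\cap(\text{fixed part of conjugation})=0$.
\item If $w$ lies over a prime $\ell$ that splits in $K$, then $\Gamma_w$ is generated by $\mathrm{Frob}_w$, whose image in $\Gamma^+$ is $\ell\in\mathbb Z_p^\times$-projected. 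This image is non-torsion since $\ell\neq p$ and $\ell$ is not a root of unity. Consequently $\Gamma_w\cong\mathbb Z_p$ meets $\Psi$ trivially, because a rank-one subgroup with nonzero image in the torsion-free quotient $\Gamma/\Psi$ has trivial intersection with $\Psi$.
\item The ramified primes of $K/\mathbb Q$ are handled in the same way: the Frobenius has nonzero cyclotomic image.
\end{itemize}
In every case $\Psi_w=0$, so the first product in $\theta_{L/L^+}$ is empty and $\theta_{L/L^+}=1$.

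\emph{The factorisation.} Here we invoke the known factorisation from \cite[Thm.~2.1.1, Def.~2.2.2]{CGS23} (ultimately due to Perrin--Riou's construction via Hida/Rankin convolution): the cyclotomic restriction of the two-variable Rankin $L$-function equals the product of the two Mazur--Swinnerton-Dyer functions up to a unit. We would verify this, if needed, by comparing interpolation: for CGS-admissible characters $\eta$ factoring through $\Gamma^+$ we have $L(E/K,\bar\eta,1)=L(E,\bar\eta,1)L(E^K,\bar\eta,1)$, and the period and Euler factors in $I^{\mathrm{PR}}(\eta)$ match those of the MSD interpolation up to a unit, using $\Omega_{E/K}\sim\Omega_E\Omega_{E^K}$ up to $p$-adic units under (disc) and (sur). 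Two elements of $\Lambda^+$ agreeing at infinitely many finite-order characters up to a ratio interpolating a unit are then related by a unit. This period comparison and the integral unit statement are the main obstacle; I would cite \cite[\S2]{CGS23} for it rather than redo it.

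\emph{Non-vanishing and conclusion.} Kato's theorem and Rohrlich's non-vanishing give $\mathcal L_p^{\mathrm{MSD}}(E/\mathbb Q)\neq0$ and $\mathcal L_p^{\mathrm{MSD}}(E^K/\mathbb Q)\neq0$. By the factorisation, $\operatorname{pr}^+(\mathcal L^{\mathrm{PR}}_{E/L})\neq0$, so $L^+$ is non-exceptional. Finally, since $\theta_{L/L^+}=1$, the definition of the linewise element gives $\mathcal L_{E/L^+}=\operatorname{pr}^+(\mathcal L^{\mathrm{PR}}_{E/L})$.
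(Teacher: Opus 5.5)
Your proposal is correct and matches the paper's proof step for step. Total ramification at $v,\bar v$ gives $e_v=e_{\bar v}=1$. For $w\nmid p$, $\Psi_w=0$ because $\mathrm{Frob}_w$ has infinite-order image in $\Gamma^+$; the paper argues this uniformly via the pro-$p$ part of $q_w>1$, which also covers inert primes where $q_w=\ell^2$, so your conjugation-eigenspace argument there is correct but unnecessary. The factorisation is simply quoted from CGS23, as in the paper, which cites Prop.~2.2.4 there.

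The one genuine variation is the non-vanishing of the two Mazur--Swinnerton-Dyer factors. You get it from Rohrlich's theorem plus the interpolation formula (Kato is not needed). The paper instead applies [BCS25, Thm.~1.1.2(a)] to $E$ and $E^K$, after checking that $E^K$ is ordinary at $p$ and $E^K[p]$ is irreducible. Both routes work, and yours needs no hypotheses on $E^K$ beyond the existence of its $p$-adic $L$-function.
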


\begin{proof}
We first compute the local factor. The two primes $v$ and $\bar v$
of $K$ above $p$ are ramified in $L^+/K$. Indeed, the corresponding
local extensions are cyclotomic $\mathbb Z_p$-extensions of
\[
   K_v=K_{\bar v}=\mathbb Q_p
\]
and are therefore totally ramified. It follows from
Definition~\ref{def:local-factor}(ii) that
\[
   e_v(L^+)=e_{\bar v}(L^+)=1.
\]

Now let $w\nmid p$ be a finite place of $K$, and put
\[
   q_w=\#\mathbb F_w.
\]
Since $L/K$ is unramified at $w$, the decomposition group
$\Gamma_w\subseteq\Gamma$ is procyclic and is topologically generated
by the Frobenius element at $w$. Under the isomorphism
\[
   \Gamma^+\simeq
   \operatorname{Gal}(\mathbb Q_\infty/\mathbb Q),
\]
the image of this Frobenius is the pro-$p$ component of
$q_w\in\mathbb Z_p^\times$. Since the rational integer $q_w>1$ is
not a $p$-adic root of unity, this component is nontrivial and hence
has infinite order in the torsion-free group $\Gamma^+$. The map
\[
   \Gamma_w\longrightarrow\Gamma^+
\]
is therefore injective. Consequently,
\[
   \Psi_w
   =
   \Gamma_w\cap\operatorname{Gal}(L/L^+)
   =
   \ker(\Gamma_w\longrightarrow\Gamma^+)
   =
   0.
\]
Thus no place $w\nmid p$ contributes to the first product in
Definition~\ref{def:local-factor}(iii), and hence $\theta_{L/L^+}=1.$

Assumption~\ref{ass:BCS} supplies the hypotheses of
\cite[Prop.~2.2.4]{CGS23}: the curve $E$ has good ordinary reduction
at $p$; since $p>3$ and $p\nmid N$, one has $p\nmid 2N$;
$(\mathrm{Heeg})$ implies $(D_K,N)=1$; and $(\mathrm{spl})$ gives
$p=v\bar v$ in $K$. Consequently, that proposition gives the
comparison formula
\[
   \operatorname{pr}^{+}
      \bigl(\mathcal L^{\mathrm{PR}}_{E/L}\bigr)
   =
   u^+\,\cdot 
   \mathcal L_p^{\mathrm{MSD}}(E/\mathbb Q)
   \cdot \mathcal L_p^{\mathrm{MSD}}(E^K/\mathbb Q)
\]
for some $u^+\in(\Lambda^+)^\times$. Since $p$ splits in $K$, the curves
$E$ and $E^K$ are isomorphic over $\mathbb Q_p$, so $E^K$ has good
ordinary reduction at $p$. Moreover,
$E^K[p]$ is irreducible, being a quadratic twist of $E[p]$. Thus
\cite[Thm.~1.1.2(a)]{BCS25}, applied to $E$ and $E^K$, shows that both
\[
   \mathcal L_p^{\mathrm{MSD}}(E/\mathbb Q)
   \quad\text{and}\quad
   \mathcal L_p^{\mathrm{MSD}}(E^K/\mathbb Q)
\]
are nonzero. Since $\Lambda^+$ is an integral domain, their product
is nonzero. It follows that
\[
   \operatorname{pr}^{+}
      \bigl(\mathcal L^{\mathrm{PR}}_{E/L}\bigr)\neq0,
\]
and hence $L^+$ is non-exceptional. Finally,
$\theta_{L/L^+}=1$ gives
\[
   \mathcal L_{E/L^+}
   =
   \operatorname{pr}^{+}
      \bigl(\mathcal L^{\mathrm{PR}}_{E/L}\bigr).
\]
\end{proof}

\begin{corollary}[Cyclotomic characteristic ideal]
\label{cor:cyclotomic-characteristic}
Under Assumption~\ref{ass:BCS}, one has
\[
   \operatorname{char}_{\Lambda^+}(X_{L^+})
   =
   \left(
      \mathcal L_p^{\mathrm{MSD}}(E/\mathbb Q)
      \cdot \mathcal L_p^{\mathrm{MSD}}(E^K/\mathbb Q)
   \right).
\]
\end{corollary}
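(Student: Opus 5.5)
This is a direct assembly of results already proved. By Theorem~\ref{thm:linewise-specialisation}(i), applied to the cyclotomic line $L'=L^+$, we have
\[
   \bigl(\mathcal L_{E/L^+}\bigr)=\operatorname{char}_{\Lambda^+}(X_{L^+}).
\]
By Proposition~\ref{prop:cyclotomic-specialisation}, $\theta_{L/L^+}=1$, so
\[
   \mathcal L_{E/L^+}=\operatorname{pr}^{+}\bigl(\mathcal L^{\mathrm{PR}}_{E/L}\bigr)
   = u^+\cdot\mathcal L_p^{\mathrm{MSD}}(E/\mathbb Q)\cdot\mathcal L_p^{\mathrm{MSD}}(E^K/\mathbb Q),
\]
with $u^+\in(\Lambda^+)^\times$. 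Since multiplying a generator by a unit does not change the principal ideal it generates, the ideal generated by $\mathcal L_{E/L^+}$ equals $\bigl(\mathcal L_p^{\mathrm{MSD}}(E/\mathbb Q)\cdot\mathcal L_p^{\mathrm{MSD}}(E^K/\mathbb Q)\bigr)$. This gives the claimed equality.

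As a consistency check, Proposition~\ref{prop:cyclotomic-specialisation} also shows that this product is nonzero. Together with Corollary~\ref{cor:exceptional-equivalences}, this means $X_{L^+}$ is $\Lambda^+$-torsion, so the characteristic ideal is a genuine nonzero ideal rather than $(0)$ by convention. The statement does not need this, since the equality holds regardless.

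There is no real obstacle. The only point to watch is normalisation: the Mazur--Swinnerton-Dyer $p$-adic $L$-functions must be viewed in $\Lambda^+$ through the identification $\Gamma^+\simeq\operatorname{Gal}(\mathbb Q_\infty/\mathbb Q)$, exactly as fixed before Proposition~\ref{prop:cyclotomic-specialisation}. Because the unit $u^+$ absorbs any discrepancy in normalisation, the identity is one of ideals only.
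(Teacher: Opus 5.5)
Your proposal is correct and follows essentially the same route as the paper: apply Theorem~\ref{thm:linewise-specialisation}(i) to $L^+$, then use Proposition~\ref{prop:cyclotomic-specialisation} ($\theta_{L/L^+}=1$ and the comparison with the product of Mazur--Swinnerton-Dyer $p$-adic $L$-functions, up to the unit $u^+$) and discard the unit. Your added remark that $X_{L^+}$ is torsion is a correct consequence of Corollary~\ref{cor:exceptional-equivalences}, but, as you say, the equality does not need it.
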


\begin{proof}
By Theorem~\ref{thm:linewise-specialisation}(i),
\[
   \operatorname{char}_{\Lambda^+}(X_{L^+})
   =
   \bigl(\mathcal L_{E/L^+}\bigr).
\]
Proposition~\ref{prop:cyclotomic-specialisation} identifies the latter
ideal with
\[
   \left(
      u^+\,
      \cdot \mathcal L_p^{\mathrm{MSD}}(E/\mathbb Q)
      \cdot \mathcal L_p^{\mathrm{MSD}}(E^K/\mathbb Q)
   \right).
\]
Since $u^+$ is a unit in $\Lambda^+$, the result follows.
\end{proof}

\subsection{The anticyclotomic line}

Let $L^-/K$ be the
anticyclotomic $\mathbb Z_p$-extension of $K$, and set
\[
   \Gamma^-=\operatorname{Gal}(L^-/K),
   \qquad
   \Lambda^-=\mathbb Z_p[[\Gamma^-]].
\]
We write $\operatorname{pr}^{-}\colon\Lambda\longrightarrow\Lambda^-$ for the corresponding projection.

\begin{proposition}[Anticyclotomic exceptionality]
\label{prop:anticyclotomic-exceptionality}
Under Assumption~\ref{ass:BCS}, the line $L^-/K$ is exceptional. More
precisely,
\[
   \operatorname{pr}^{-}
      \bigl(\mathcal L^{\mathrm{PR}}_{E/L}\bigr)=0,
\]
and $X_{L^-}$ is not torsion over $\Lambda^-$.
\end{proposition}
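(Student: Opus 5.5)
By Corollary~\ref{cor:exceptional-equivalences}, the two assertions of the proposition are equivalent, so it is enough to prove one of them. I would show directly that $X_{L^-}$ is not $\Lambda^-$-torsion. The route is to exhibit growth of the $p^\infty$-Selmer coranks along the layers of $L^-/K$, using Heegner points under $(\mathrm{Heeg})$.

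\textbf{Step 1 (non-torsion of $X_{L^-}$).} Under $(\mathrm{Heeg})$, Heegner points of conductor $p^n$ give, after corestriction and $\alpha_p$-regularisation, a compatible system of classes. These assemble into a $\Lambda^-$-adic class $\kappa_\infty$ in the Iwasawa cohomology $\varprojlim_n \mathrm{Sel}(L^-_n, T_pE)$, as in Perrin--Riou's Heegner point main conjecture proved in \cite{BCS25}. By Cornut--Vatsal, these points are non-torsion for $n\gg0$, so $\kappa_\infty$ is not $\Lambda^-$-torsion. Then a standard control and duality argument (Poitou--Tate / Nekov\'a\v{r}'s structure theorem, or directly \cite[Thm.~1.2.x]{BCS25}) gives
\[
\operatorname{rank}_{\Lambda^-} X_{L^-}=\operatorname{rank}_{\Lambda^-}\varprojlim_n \mathrm{Sel}(L^-_n,T_pE)=1 .
\]
Here one has to match Selmer conditions: the ordinary Selmer group of $X_{L^-}$ imposes the Greenberg condition at both $v$ and $\bar v$. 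At split primes where the anticyclotomic extension is ramified, the ordinary and Bloch--Kato conditions agree up to finite error. This matching step is the point I expect to need most care.

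\textbf{Step 2 (an alternative via sign).} As a cross-check, or as the main argument if Step~1 citations prove awkward, use Theorem~\ref{thm:linewise-specialisation}(ii). Take $\eta$ a finite-order character of $\Gamma^-$ with $\widetilde\eta$ CGS-admissible; every nontrivial such $\eta$ qualifies, since $L^-/K$ is ramified only at $v,\bar v$. Its Hecke character $\bar\eta$ is anticyclotomic, so $L(E/K,\bar\eta,s)$ is self-dual. Under $(\mathrm{Heeg})$ the root number is $-\varepsilon_K(N)=-1$, so $L(E/K,\bar\eta,1)=0$ for all such $\eta$; for $\eta=1$, $L(E/K,1)=0$ similarly.

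Hence $\mathrm{pr}^-(\mathcal L^{\mathrm{PR}}_{E/L})$ vanishes at infinitely many finite-order characters of $\Gamma^-$. By Weierstrass preparation, a nonzero element of $\Lambda^-\simeq\mathbb Z_p[[T]]$ has only finitely many zeros in the open unit disc, so it is $0$. The main obstacle on this route is to check the root-number computation in the normalisation of \cite[Thm.~2.2.1]{CGS23}: the twist by ramified anticyclotomic $\bar\eta$ at the split primes $v,\bar v$ must not change the global sign.

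I would present Step~2 as the proof, since it uses only Proposition~\ref{prop:PR-interpolation} and a root-number computation, and then deduce non-torsionness from Corollary~\ref{cor:exceptional-equivalences}.
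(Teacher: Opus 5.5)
\textbf{Comparison with the paper's proof.} Your Step 2 argument is correct, but it runs in the opposite direction from the paper's proof.

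The paper proves the algebraic statement first. It cites \cite[Thm.~1.2.2(a)]{BCS25} for $\operatorname{rank}_{\Lambda^-}X_{L^-}=1$. It then uses Corollary~\ref{cor:exceptional-equivalences}, and hence the two-variable main conjecture together with Tan's specialisation formula, to deduce $\operatorname{pr}^-(\mathcal L^{\mathrm{PR}}_{E/L})=0$. Your Step 1, in its ``cite BCS directly'' form, is exactly this argument. The Heegner-class reconstruction you sketch there is not needed.

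Your Step 2 instead proves the analytic vanishing first, using only Proposition~\ref{prop:PR-interpolation} and the sign of the functional equation. The obstacle you flag is harmless:
\begin{itemize}
\item Every finite-order character of $\Gamma^-$ is a ring class character of $p$-power conductor, and $p\nmid ND_K$.
\item The local root numbers at $v$ and $\bar v$ cancel because the character is anticyclotomic.
\item So the standard Gross--Zagier computation gives global sign $-1$ under $(\mathrm{Heeg})$.
\item Since it is the complex $L$-value that vanishes, the normalisation of $I^{\mathrm{PR}}$ plays no role.
\end{itemize}
This yields $\operatorname{pr}^-(\mathcal L^{\mathrm{PR}}_{E/L})=0$ with no main conjecture at all. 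The non-torsionness of $X_{L^-}$ then follows from the corollary as a consequence of the BCS two-variable main conjecture. In effect, your route re-derives the non-torsion part of \cite[Thm.~1.2.2(a)]{BCS25} rather than importing it, since that theorem rests on Heegner points and Cornut--Vatsal. The paper's route is a one-line citation but leans on that deeper input. Yours is more self-contained for the vanishing, at the cost of a root-number check.

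\textbf{One small correction.} It is not true that every nontrivial $\eta$ on $\Gamma^-$ has CGS-admissible inflation. If $p\mid h_K$, the field $L^-\cap H_K$ can be a nontrivial finite extension of $K$. Characters factoring through it are everywhere unramified and are excluded by definition. This does not damage the argument. Since $L^-/K$ cannot be everywhere unramified, and complex conjugation swaps $v$ and $\bar v$, the inertia group at $v$ (equal to that at $\bar v$) is a nontrivial, hence open, subgroup of $\Gamma^-\simeq\mathbb Z_p$. So all but finitely many finite-order characters of $\Gamma^-$ are ramified at both $v$ and $\bar v$ and are therefore admissible. Infinitely many admissible characters is all the Weierstrass preparation step needs.
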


\begin{proof}
By \cite[Thm.~1.2.2(a)]{BCS25}, the module
\[
   X_{\mathrm{ord}}(E/L^-),
\]
which is $X_{L^-}$ in our notation, has rank one over $\Lambda^-$.
It is therefore not torsion. Corollary
\ref{cor:exceptional-equivalences} now implies that $L^-$ is
exceptional and hence that
\[
   \operatorname{pr}^{-}
      \bigl(\mathcal L^{\mathrm{PR}}_{E/L}\bigr)=0.
\]
\end{proof}

\subsection{Conditional uniqueness of the exceptional line}

Let
\[
   F=\mathcal L^{\mathrm{PR}}_{E/L},
   \qquad
   \a=\ker(\Lambda\longrightarrow\mathbb Z_p),
   \qquad
   J^+=\ker(\Lambda^+\longrightarrow\mathbb Z_p)
\]
be the two-variable Perrin--Riou element and the augmentation ideals
of $\Lambda$ and $\Lambda^+$, respectively. For
$0\neq h\in\Lambda^+$, set
\[
   \ord_{J^+}(h)=\max\{n\geq0:h\in(J^+)^n\}.
\]
Recall that \(e(E,p,K)\) denotes the number of exceptional
\(\mathbb Z_p\)-lines in \(L/K\), which is finite by
Proposition~\ref{prop:finiteness-exceptional}.
\begin{proposition}[Cyclotomic bound]
\label{prop:cyclotomic-bound-exceptional}
Under Assumption~\ref{ass:BCS}, one has
\[
   F\in\a^{e(E,p,K)}
   \qquad\text{and}\qquad
   \mathcal L_{E/L^+}\in(J^+)^{e(E,p,K)}.
\]
Consequently,
\[
   e(E,p,K)
   \leq
   \ord_{J^+}\bigl(\mathcal L_{E/L^+}\bigr).
\]
\end{proposition}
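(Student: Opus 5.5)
Let $L'_1,\dots,L'_e$ be the exceptional lines, with $e=e(E,p,K)$ finite by Proposition~\ref{prop:finiteness-exceptional}, and put $\Psi_i=\operatorname{Gal}(L/L'_i)$. As in the proof of Proposition~\ref{prop:finiteness-exceptional}, choose a topological generator $\psi_i$ of each $\Psi_i$. Then $\ker(\operatorname{pr}_{L'_i})=(\psi_i-1)$ is a height-one prime of $\Lambda$, and these primes are pairwise distinct. Each $\psi_i-1$ is a prime element of the UFD $\Lambda$, and the $\psi_i-1$ are pairwise non-associate. Since $F$ lies in every $(\psi_i-1)$, unique factorisation gives
\[
F=\Bigl(\prod_{i=1}^{e}(\psi_i-1)\Bigr)\cdot G,\qquad G\in\Lambda .
\]
Each factor $\psi_i-1$ lies in $\a$ because $\psi_i$ maps to $1$ under augmentation. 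Hence $F\in\a^{e}$, which is the first assertion.

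For the second assertion, apply $\operatorname{pr}^+$. It is a ring homomorphism compatible with augmentations, so $\operatorname{pr}^+(\a)\subseteq J^+$ and therefore $\operatorname{pr}^+(\a^e)\subseteq (J^+)^e$. By Proposition~\ref{prop:cyclotomic-specialisation}, $\theta_{L/L^+}=1$, so $\mathcal L_{E/L^+}=\operatorname{pr}^+(F)\in(J^+)^e$. The same proposition shows $\mathcal L_{E/L^+}\neq 0$, so $\ord_{J^+}(\mathcal L_{E/L^+})$ is defined, and by its definition $e\le \ord_{J^+}(\mathcal L_{E/L^+})$.

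The only step needing care is the passage from ``$F$ lies in $e$ distinct height-one primes'' to ``$F$ is divisible by their product''. This holds because $\Lambda\simeq\mathbb Z_p[[T_1,T_2]]$ is a UFD and each prime $(\psi_i-1)$ is principal. One should also check that $\psi_i-1$ is indeed prime, which follows since $\Lambda/(\psi_i-1)\simeq\Lambda'_i$ is a domain, and that distinct lines give non-associate generators, which was shown in Proposition~\ref{prop:finiteness-exceptional}. Everything else is formal.
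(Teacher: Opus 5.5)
Your proof is correct and is essentially the paper's argument. You divide $F$ by the product of the pairwise non-associate primes $\psi_i-1$ in the UFD $\Lambda$, note that each factor lies in $\a$, push forward along $\operatorname{pr}^+$ into $(J^+)^e$, and use $\theta_{L/L^+}=1$ together with $\mathcal L_{E/L^+}\neq 0$ from Proposition~\ref{prop:cyclotomic-specialisation}; your checks that each $\psi_i-1$ is prime and that distinct lines give non-associate generators make explicit what the paper cites from the finiteness proof.
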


\begin{proof}
Let $L_1,\ldots,L_e$ be the exceptional lines, where
$e=e(E,p,K)$. For each $i$, choose a topological generator $\psi_i$
of $\operatorname{Gal}(L/L_i)$. The proof of
Proposition~\ref{prop:finiteness-exceptional} gives
\[
   \ker(\operatorname{pr}_{L_i})=(\psi_i-1),
\]
and these are pairwise distinct height-one prime ideals of the unique
factorisation domain $\Lambda$. Since $L_i$ is exceptional,
$\psi_i-1$ divides $F$. Hence
\[
   \prod_{i=1}^{e}(\psi_i-1)\mid F.
\]
Every factor $\psi_i-1$ belongs to $\a$, and therefore
$F\in\a^e$. The cyclotomic projection sends $\a$ into $J^+$.
Proposition~\ref{prop:cyclotomic-specialisation} gives
\[
   \operatorname{pr}^+(F)=\mathcal L_{E/L^+}\neq0.
\]
It follows that
$\mathcal L_{E/L^+}\in(J^+)^e$, which proves the asserted bound.
\end{proof}

\begin{lemma}
\label{lem:EK-p-torsion}
Under Assumption~\ref{ass:BCS}, one has $E(K)[p]=0$.
\end{lemma}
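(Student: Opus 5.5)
The plan is to reuse the group-theoretic input already established in the proof of Theorem~\ref{thm:Tan-specialisation}. Condition $(\mathrm{sur})$ says that $\overline{\rho}_{E,p}(G_{\mathbb Q})=\operatorname{GL}_2(\mathbb F_p)$. Since $G_K$ has index at most $2$ in $G_{\mathbb Q}$, its image $H=\overline{\rho}_{E,p}(G_K)$ is a normal subgroup of index at most $2$.

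\textbf{Step 1: $H\supseteq\operatorname{SL}_2(\mathbb F_p)$.} The quotient $\operatorname{GL}_2(\mathbb F_p)/H$ is abelian. Hence $H$ contains the commutator subgroup
\[
[\operatorname{GL}_2(\mathbb F_p),\operatorname{GL}_2(\mathbb F_p)]=\operatorname{SL}_2(\mathbb F_p),
\]
where the equality uses $p>3$. This is exactly the argument of the proof of Theorem~\ref{thm:Tan-specialisation}, and I will simply cite it.

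\textbf{Step 2: $E(K)[p]=0$.} The group $E(K)[p]$ is the subspace of $E[p]$ fixed by $G_K$, so it is fixed pointwise by $H$. A nonzero vector of $\mathbb F_p^2$ fixed by all of $\operatorname{SL}_2(\mathbb F_p)$ cannot exist, because $\operatorname{SL}_2(\mathbb F_p)$ acts transitively on the nonzero vectors. For a concrete witness, complete a fixed vector $x\neq0$ to a basis $(x,y)$. Then the element of $\operatorname{SL}_2(\mathbb F_p)$ sending $x\mapsto y$ and $y\mapsto -x$ moves $x$. Therefore $E(K)[p]=0$.

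Alternatively, one could cite the statement $E[p](L')=0$ from the proof of Theorem~\ref{thm:Tan-specialisation}, since $K\subseteq L'$ and so $E(K)[p]\subseteq E[p](L')$. I would mention this as a one-line alternative.

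No step is a genuine obstacle. The only point needing care is the identification of the commutator subgroup with $\operatorname{SL}_2(\mathbb F_p)$ for $p>3$, which is standard and was already invoked earlier in the paper.
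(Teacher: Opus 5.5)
Your proof is correct, but it takes a different route from the paper. The paper never passes to $\operatorname{SL}_2(\mathbb F_p)$. Since $G_K$ is normal in $G_{\mathbb Q}$, the invariants $E[p]^{G_K}$ form a $G_{\mathbb Q}$-submodule of $E[p]$, so by irreducibility over $G_{\mathbb Q}$ they are either $0$ or all of $E[p]$. In the latter case $\overline{\rho}_{E,p}$ would factor through $\operatorname{Gal}(K/\mathbb Q)$, contradicting $(\mathrm{sur})$.

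That argument is shorter and uses less. It needs only normality of $G_K$, $(\mathrm{irr}_{\mathbb Q})$, and the fact that the image of $G_{\mathbb Q}$ has order greater than $2$, with no appeal to the commutator structure of $\operatorname{GL}_2(\mathbb F_p)$.

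Your route instead relies on the stronger fact $\overline{\rho}_{E,p}(G_K)\supseteq\operatorname{SL}_2(\mathbb F_p)$, already established in the proof of Theorem~\ref{thm:Tan-specialisation}. After that, the vanishing is immediate from the transitivity of $\operatorname{SL}_2(\mathbb F_p)$ on nonzero vectors; your witness $x\mapsto y$, $y\mapsto -x$ does have determinant $1$. The advantage is economy within the paper. Your one-line alternative $E(K)[p]\subseteq E[p](L')=0$ makes this explicit: the lemma is really a special case of what was already proved there.
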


\begin{proof}
Since $G_K$ is normal in $G_{\mathbb Q}$, the subspace
$E[p]^{G_K}$ is stable under $G_{\mathbb Q}$. Assumption
$(\mathrm{sur})$ implies that $E[p]$ is an irreducible
$G_{\mathbb Q}$-module. Hence, if $E[p]^{G_K}\neq0$, then $G_K$
acts trivially on $E[p]$. The residual representation would then
factor through $\operatorname{Gal}(K/\mathbb Q)$, contradicting its
surjectivity onto $\operatorname{GL}_2(\mathbb F_p)$.
\end{proof}

\begin{theorem}
\label{thm:conditional-uniqueness}

Under Assumption~\ref{ass:BCS}, suppose that
$\Sha(E/K)[p^\infty]$ is finite and that
$\operatorname{Reg}_p^{\mathrm{cyc}}(E/K)\neq0$. Then
\[
   \ord_{J^+}\bigl(\mathcal L_{E/L^+}\bigr)
   =
   \rank_{\mathbb Z}E(K)
\]
and
\[
   e(E,p,K)\leq\rank_{\mathbb Z}E(K).
\]
Moreover, $X_{L^+}$ is semisimple at $J^+$, in the sense that
\[
   J^+(X_{L^+})_{J^+}=0.
\]
If $\rank_{\mathbb Z}E(K)=1$, then the anticyclotomic line $L^-/K$
is the unique exceptional line.
\end{theorem}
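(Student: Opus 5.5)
The plan is to compute $\ord_{J^+}$ of $\operatorname{char}_{\Lambda^+}(X_{L^+})$ through Mazur's control theorem and the $p$-adic height pairing (a Perrin--Riou/Schneider style argument), and then combine it with Proposition~\ref{prop:cyclotomic-bound-exceptional}.

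\textbf{Step 1: the characteristic ideal is controlled at $J^+$.} By Corollary~\ref{cor:cyclotomic-characteristic} and Theorem~\ref{thm:linewise-specialisation}(i), $(\mathcal L_{E/L^+})=\operatorname{char}_{\Lambda^+}(X_{L^+})$, and $X_{L^+}$ is torsion because $L^+$ is non-exceptional. So it suffices to show
\[
   \ord_{J^+}\operatorname{char}_{\Lambda^+}(X_{L^+})=r:=\rank_{\mathbb Z}E(K).
\]
Write $\gamma$ for a topological generator of $\Gamma^+$ and $T=\gamma-1$. The cyclotomic control theorem (Mazur) applies, since $E$ is ordinary at both $v,\bar v$ and $E(K)[p]=0$ by Lemma~\ref{lem:EK-p-torsion}. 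It gives that $(X_{L^+})_{\Gamma^+}$ has finite kernel and cokernel to the dual of $\operatorname{Sel}_{p^\infty}(E/K)$. That Selmer dual has $\mathbb Z_p$-rank $r$ because $\Sha(E/K)[p^\infty]$ is finite. Hence $\rank_{\mathbb Z_p}X_{L^+}/TX_{L^+}=r$.

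\textbf{Step 2: the order of vanishing equals this rank under semisimplicity.} By the structure theorem, $\ord_T\operatorname{char}(X_{L^+})\ge \rank_{\mathbb Z_p}X_{L^+}/TX_{L^+}$. Equality holds if and only if no elementary summand $\Lambda^+/T^k$ with $k\ge2$ occurs, that is, if and only if $T$ kills $(X_{L^+})_{J^+}$. This is exactly the semisimplicity assertion $J^+(X_{L^+})_{J^+}=0$. Equivalently, the natural map $X_{L^+}[T]\to X_{L^+}/TX_{L^+}$ becomes an isomorphism after $\otimes\mathbb Q_p$.

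\textbf{Step 3: semisimplicity from the height pairing (main obstacle).} I expect this step to be the main obstacle. I would invoke the theorem of Schneider and Perrin--Riou identifying the Bockstein (or Cassels--Tate type) pairing on $\operatorname{Sel}_{p^\infty}(E/K)$ with the cyclotomic $p$-adic height pairing, up to $\log_p\kappa(\gamma)$. That pairing is
\[
   \operatorname{Sel}(E/K)_{\mathrm{div}}\times\operatorname{Sel}(E/K)_{\mathrm{div}}\to\mathbb Q_p/\mathbb Z_p,
\]
induced by the connecting map of $0\to X[T]\to X\xrightarrow{T}X\to X/T\to0$. Since $\Sha$ is finite, the divisible part of the Selmer group is $E(K)\otimes\mathbb Q_p/\mathbb Z_p$. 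Non-vanishing of $\operatorname{Reg}_p^{\mathrm{cyc}}(E/K)$ says the pairing is non-degenerate. This forces the map $X[T]\otimes\mathbb Q_p\to (X/TX)\otimes\mathbb Q_p$ to be an isomorphism, which is Schneider's criterion for no higher $T$-power summands. This gives $J^+(X_{L^+})_{J^+}=0$ and $\ord_{J^+}(\mathcal L_{E/L^+})=r$, where the $J^+$-adic order agrees with $\ord_T$ because $J^+=(T)$. Here I would rely on Schneider's structure theorem \cite{Sch82} and check its hypotheses in the split ordinary case over $K$; no anomalous primes arise because the finiteness of $\Sha$ and the regulator hypotheses replace them.

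\textbf{Step 4: conclusion.} Proposition~\ref{prop:cyclotomic-bound-exceptional} now gives $e(E,p,K)\le r$. If $r=1$, then $e\le1$, and Proposition~\ref{prop:anticyclotomic-exceptionality} shows that $L^-$ is exceptional. Hence $L^-$ is the unique exceptional line.
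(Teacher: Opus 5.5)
Your proof is correct, and it shares the paper's skeleton. The control theorem and the finiteness of $\Sha(E/K)[p^\infty]$ give $\dim_{\mathbb Q_p}X^+/J^+X^+=r$ for $X^+=(X_{L^+})_{J^+}$. In the discrete valuation ring $R=(\Lambda^+)_{J^+}$ one then has $\operatorname{length}_R(X^+)=\sum a_i\geq s=r$, with equality if and only if $J^+X^+=0$.

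The difference is which of these two equivalent conclusions you import from Schneider. The paper quotes Schneider's leading-term theorem, which gives $\ord_{J^+}$ of the characteristic power series equal to $r$, and then deduces semisimplicity from the length count. You import semisimplicity from the Schneider--Perrin-Riou comparison between the height pairing and the Bockstein map, and then deduce $\ord_{J^+}(\mathcal L_{E/L^+})=r$ from the same count. The paper's route needs only one directly quotable statement about the order of vanishing. Yours explains conceptually why a non-zero regulator is exactly what rules out summands $R/(J^+R)^{a}$ with $a\geq 2$, but it depends on the finer comparison theorem.

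You should state that comparison correctly, because the pairing you wrote down is trivial. Any bilinear pairing $\operatorname{Sel}(E/K)_{\mathrm{div}}\times\operatorname{Sel}(E/K)_{\mathrm{div}}\to\mathbb Q_p/\mathbb Z_p$ vanishes identically: if $p^na=0$, write $b=p^nb'$, so that $\langle a,b\rangle=\langle p^na,b'\rangle=0$. The correct formulation concerns the composite $\operatorname{Sel}(E/L^+)^{\Gamma^+}\hookrightarrow\operatorname{Sel}(E/L^+)\twoheadrightarrow\operatorname{Sel}(E/L^+)_{\Gamma^+}$, whose Pontryagin dual is your map $X_{L^+}[T]\to X_{L^+}/TX_{L^+}$. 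Via control and Kummer theory, this composite is governed, up to $\log_p\kappa(\gamma)$, by the $\mathbb Q_p$-valued height pairing on $E(K)\otimes\mathbb Q_p$. Hence it has finite kernel and cokernel exactly when $\operatorname{Reg}_p^{\mathrm{cyc}}(E/K)\neq0$.

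Your remark about anomalous primes is beside the point. For good ordinary reduction there is no trivial zero, and anomaly affects only the leading coefficient, not the order of vanishing.
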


\begin{proof}
The curve $E$ has good ordinary reduction at every prime of $K$
above $p$, and Lemma~\ref{lem:EK-p-torsion} gives $E(K)[p]=0$.
Schneider's cyclotomic leading-term theorem
\cite{Schneider85}, in the form stated in
\cite[Thm.~3.2]{Ray23}, applies to a generator $f^+$ of
$\operatorname{char}_{\Lambda^+}(X_{L^+})$ and gives
\[
   \ord_{J^+}(f^+)
   =
   \rank_{\mathbb Z}E(K).
\]
Theorem~\ref{thm:linewise-specialisation}(i) gives
\[
   (f^+)=\bigl(\mathcal L_{E/L^+}\bigr).
\]
The first equality follows, and
Proposition~\ref{prop:cyclotomic-bound-exceptional} gives the stated
bound.

Set
\[
   R=(\Lambda^+)_{J^+},
   \qquad
   X^+=(X_{L^+})_{J^+},
   \qquad
   r=\rank_{\mathbb Z}E(K).
\]
The ring $R$ is a discrete valuation ring with maximal ideal $J^+R$
and residue field $\mathbb Q_p$. Mazur's cyclotomic control theorem
\cite{Mazur72}, in the exact restriction-map form recorded in
\cite[p.~59]{Ochiai01}, together with the Kummer exact sequence and
the finiteness of $\Sha(E/K)[p^\infty]$, gives
\[
   \dim_{\mathbb Q_p}\bigl(X^+/J^+X^+\bigr)=r.
\]
Since $X^+$ has finite length over $R$, there are positive integers
$a_1,\ldots,a_s$ such that
\[
   X^+\simeq\bigoplus_{i=1}^s R/(J^+R)^{a_i}.
\]
The preceding dimension formula gives $s=r$, while Schneider's
leading-term theorem gives
\[
   \sum_{i=1}^s a_i
   =\operatorname{length}_R(X^+)
   =r.
\]
Thus every $a_i$ is equal to one and $J^+X^+=0$.

If $\rank_{\mathbb Z}E(K)=1$, the bound shows that there is at
most one exceptional line. Proposition
\ref{prop:anticyclotomic-exceptionality} shows that $L^-/K$ is
exceptional, so it is the unique exceptional line.
\end{proof}

\begin{corollary}[Analytic rank one]
\label{cor:analytic-rank-one-uniqueness}
Under Assumption~\ref{ass:BCS}, suppose that
\[
   \ord_{s=1}L(E/K,s)=1
   \qquad\text{and}\qquad
   \operatorname{Reg}_p^{\mathrm{cyc}}(E/K)\neq0.
\]
Then the anticyclotomic line $L^-/K$ is the unique exceptional line.
\end{corollary}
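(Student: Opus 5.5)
The plan is to reduce Corollary~\ref{cor:analytic-rank-one-uniqueness} to the last assertion of Theorem~\ref{thm:conditional-uniqueness}. That theorem already shows that $L^-/K$ is the unique exceptional line under three conditions: $\Sha(E/K)[p^\infty]$ is finite, $\operatorname{Reg}_p^{\mathrm{cyc}}(E/K)\neq0$, and $\rank_{\mathbb Z}E(K)=1$. The regulator condition is assumed, so it remains to derive $\rank_{\mathbb Z}E(K)=1$ and the finiteness of $\Sha(E/K)[p^\infty]$ from $\ord_{s=1}L(E/K,s)=1$.

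The first step uses the Gross--Zagier--Kolyvagin theorem over $K$. By $(\mathrm{Heeg})$, every prime dividing $N$ splits in $K$, so $K$ satisfies the Heegner hypothesis for $E$ and the root number of $E/K$ is $-1$. Gross--Zagier then says that $L'(E/K,1)\neq0$ is equivalent to the basic Heegner point $y_K\in E(K)$ having nonzero N\'eron--Tate height, so $y_K$ has infinite order. Kolyvagin's Euler system argument, applied to the non-torsion point $y_K$, gives $\rank_{\mathbb Z}E(K)=1$ and shows that $\Sha(E/K)$ is finite, so in particular $\Sha(E/K)[p^\infty]$ is finite. A version of this is also recorded in \cite{BCS25}, and I would cite it from there if convenient.

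The second step applies Theorem~\ref{thm:conditional-uniqueness} with these two inputs and the assumed nonvanishing of $\operatorname{Reg}_p^{\mathrm{cyc}}(E/K)$. The inequality $e(E,p,K)\le 1$ together with Proposition~\ref{prop:anticyclotomic-exceptionality} then gives the conclusion.

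The only real obstacle is checking that the Gross--Zagier--Kolyvagin input applies as stated: $K$ must satisfy the Heegner hypothesis, which $(\mathrm{Heeg})$ ensures, and $D_K$ must be suitable, which is covered by $(\mathrm{disc})$. Beyond that, the argument is purely formal.
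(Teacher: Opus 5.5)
Your proposal is correct and follows essentially the same route as the paper. Gross--Zagier (under $(\mathrm{Heeg})$ and $(\mathrm{disc})$) shows that the Heegner point has infinite order, Kolyvagin then gives $\rank_{\mathbb Z}E(K)=1$ and the finiteness of the Tate--Shafarevich group, and Theorem~\ref{thm:conditional-uniqueness} concludes (your explicit appeal to $e(E,p,K)\le 1$ together with Proposition~\ref{prop:anticyclotomic-exceptionality} is already the final step of that theorem's proof).
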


\begin{proof}
The Gross--Zagier formula
\cite[Ch.~I, (6.5)]{GrossZagier86}, as recorded in
\cite[p.~236, (1.1)]{Gross91}, shows that the Heegner point over $K$
has infinite order. Kolyvagin's theorem
\cite[Thm.~1.3]{Gross91} then gives
$\rank_{\mathbb Z}E(K)=1$ and the finiteness of $\Sha(E/K)$.
Theorem~\ref{thm:conditional-uniqueness} applies.
\end{proof}

\section*{Acknowledgements} The authors gratefully acknowledge the financial support from the Mathematics Division of the National Center for Theoretical Sciences (NCTS), and the stimulating and hospitable working environment that it provides, in which part of this work was carried out. They are especially grateful to Professor Ming-Lun Hsieh, Director of the Division, for his generosity in making this support available and for his warm encouragement of their work.

\end{document}